\newtheorem{theo}{Theorem}[section]
\newtheorem{theorem}[theo]{Theorem}
\newtheorem{corollary}[theo]{Corollary}
\newtheorem{lemma}[theo]{Lemma}
\newtheorem{propo}[theo]{Proposition}
\newtheorem{theor}{Theorem}
\newtheorem{cor}[theor]{Corollary}
\theoremstyle{definition}
\newtheorem*{acknow}{Acknowledgements}
 \newtheorem{remark}[theo]{Remark}
\newtheorem{question}[theo]{Question}	
 \newtheorem{definition}[theo]{Definition}
\numberwithin{equation}{section}
  \author{Angela Carnevale} \address{School of Mathematics, Statistics and Applied Mathematics\\National University of Ireland, Galway}
  \email{angela.carnevale@nuigalway.ie}
  \author{Matteo Cavaleri} \address{Universit\`a degli Studi Niccol\`o Cusano - Via Don Carlo Gnocchi, 3 \\ 00166 Roma, Italia} 
  \email{matte.cavaleri@gmail.com}
\begin{document}
  \title{Partial Word and Equality problems and Banach densities}
    \date{\today}
\begin{abstract}
We investigate   partial Equality and Word Problems for finitely generated groups. After introducing Upper Banach (UB) density on free groups, we prove that solvability of the Equality Problem on squares of  UB-generic sets implies solvability of the whole Word Problem. In particular, we prove that solvability of generic EP implies WP. 
We then exploit another definition of generic EP, which turns out to be equivalent to  generic WP. We characterize in different ways the class of groups with unsolvable UB-generic WP, proving that it contains that of infinite algorithmically finite groups, and it is contained in that of groups with unsolvable generic WP.
 \end{abstract}
\maketitle
\section{introduction}
One of the most striking results of the last century in group theory is certainly the existence of finitely presented groups with \emph{unsolvable Word Problem (WP)}, independently proved in  \cite{Boone} and \cite{Novikov}.
From a practical point of view, in computability and complexity theory it is often interesting to know the behavior of an algorithm on  \emph{almost all}  inputs. 
A formalization of this approach, especially for the classical decision problems for groups, was given in \cite{generic02}: the \emph{generic version of a problem} is solvable if it is solvable on a \emph{generic subset} of the input. A similar idea was already developed in group theory,  essentially  by Gromov~\cite{ gromov}, and was given a rigorous formulation by  Arzhantseva and Olshanskii~\cite{AO}.
With this new generic approach, most of the known  examples of unsolvable decision problems on groups turned out to be generically solvable, possibly even in linear time; see, for instance,~\cite{BMR09,BMR07,average,generic02,KSS,KS05,MR}. This could be an issue, for example, for applications in group-based cryptography \cite{group-based}.  
Remaining in context of the Word Problem, to the best of our knowledge, it is still unknown if there exists a finitely presented group with unsolvable generic WP. Various partial results have been obtained in this direction.
 In \cite{Myas/11}, computably presented, infinite, \emph{algorithmically finite} groups (so-called \emph{Dehn monsters}) were found. An algorithmically finite group is a group for which the Equality Problem is ``extremely undecidable'': it is impossible to computably enumerate infinitely many pairwise distinct elements. It turns out that, with a suitable definition of the \emph{partial Equality Problem (EP)}, infinite algorithmically finite groups can have solvable EP only on \emph{negligible} sets. Moreover, the work \cite{Myas/11} raised the question about the existence of  finitely presented Dehn monsters, or at least of finitely presented groups whose EP is solvable only on non-generic sets. The first question is still open, other developments can be found in \cites{khou/14,Kly/15}. For the latter question,  the second author exhibited  finitely presented groups with unsolvable \emph{generic Equality Problem}  \cite{Cavaleri/18}.
 
 The following is the first main result of this article, settled in the context of finitely generated groups, and it gives a more complete answer to the question raised in \cite[Problem 1.5, b]{Myas/11}. It is proved at the end of Section~\ref{sec:ubgwp}. We refer the reader to Section \ref{notation} for the relevant definitions.
 \begin{theor}\label{thA}
   Let $\Gamma$ be  a finitely generated group. 
If there exists a finite set of generators $X$ of $\Gamma$ such that the Equality Problem is solvable on a set $S\times S$, where
$S\subset \mathbb F_X$ is generic (that is, the generic EP of $\Gamma$
is solvable in the sense of~\cite{Myas/11}), then $\Gamma$ has solvable Word Problem.
\end{theor}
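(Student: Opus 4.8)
The plan is to reduce the Word Problem for a single word to a single, well-chosen instance of the Equality Problem, exploiting the elementary group identity that for every $u\in\mathbb F_X$ one has $u=_\Gamma uw$ if and only if $w=_\Gamma 1$. Thus the truth value of the EP query on the pair $(u,uw)$ does not depend on $u$: it is ``equal'' precisely when $w$ is trivial in $\Gamma$. Recall that ``generic EP solvable in the sense of~\cite{Myas/11}'' furnishes a partial algorithm $\mathcal A$ that is \emph{correct whenever it halts} and whose halting set contains $S\times S$ for some generic $S\subset\mathbb F_X$. The strategy is then: on input $w$, search for some $u$ for which $\mathcal A(u,uw)$ halts, and return its answer as the answer to ``$w=_\Gamma 1$?''.

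Two things must be checked: that such a $u$ exists, so that the search terminates, and that the returned answer is correct. Correctness is immediate from the never-wrong property of $\mathcal A$, since $\mathcal A(u,uw)$, when it halts, outputs the truth value of $u=_\Gamma uw$, that is, of $w=_\Gamma 1$. For termination I would use the following key point: the set $S\cap Sw^{-1}=\{u\in\mathbb F_X: u\in S \text{ and } uw\in S\}$ is itself generic, hence nonempty. Indeed, $Sw^{-1}$ is a right translate of $S$, and its complement is the right translate of the negligible complement of $S$; since right translation by a fixed element preserves negligibility (a fact I would invoke from the earlier part of the paper, or verify directly from the exponential growth of $\mathbb F_X$, the cancellation at the junction affecting only a vanishing proportion of elements), $Sw^{-1}$ is generic. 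As the generic sets are closed under finite intersection, $S\cap Sw^{-1}$ is generic, and in particular contains some $u_0$. Then $(u_0,u_0w)\in S\times S$, so $\mathcal A(u_0,u_0w)$ halts.

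The decision procedure for WP is therefore: given $w$, enumerate $\mathbb F_X=\{u_1,u_2,\dots\}$ and run $\mathcal A(u_i,\,\overline{u_iw})$ in a dovetailed fashion, where $\overline{u_iw}$ denotes the freely reduced form of the product, halting as soon as any one of these computations halts and returning its output. By the previous paragraph at least one computation halts, so the procedure terminates; by the never-wrong property its output correctly decides $w=_\Gamma 1$. This solves the Word Problem of $\Gamma$ with respect to $X$, and hence, since solvability of WP is independent of the finite generating set, the Word Problem of $\Gamma$.

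I expect the main obstacle to be the termination step, which hinges entirely on the product structure $S\times S$ in the hypothesis: a merely generic subset of $\mathbb F_X\times\mathbb F_X$ could avoid the ``slice'' $\{(u,uw):u\in\mathbb F_X\}$ altogether, so that no query of the form $(u,uw)$ would be guaranteed to fall in the good set. The product form converts the condition into $u\in S\cap Sw^{-1}$, a genericity condition in $\mathbb F_X$ alone, where the translation-invariance of negligibility applies. Making that translation-invariance precise, and independent of the choice of density and generating set, is the one genuinely technical ingredient; everything else is the group identity together with a standard dovetailing search.
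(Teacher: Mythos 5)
Your proof is correct, and its core is the same reduction the paper uses (Lemma~\ref{lem-epwp}): dovetail the EP oracle over the pairs $(u,uw)$, relying on the identity $\pi(u)=\pi(uw)\iff w\in\ker\pi$ and on the never-wrong property of the partial algorithm. Where you diverge is the termination step, i.e.\ the proof that some $(u,uw)$ lands in $S\times S$. You obtain it from translation-invariance of genericity: $S\cap Sw^{-1}$ is generic, hence nonempty. (Your parenthetical justification via ``cancellation at the junction'' is not quite the right reason; the clean argument is simply that $u\mapsto uw^{-1}$ is a bijection with $\bigl\lvert N w^{-1}\cap B_n\bigr\rvert\le \lvert N\cap B_{n+|w|}\rvert$ and $\lvert B_{n+|w|}\rvert\le C_1\alpha^{|w|}\lvert B_n\rvert$, so negligibility is preserved; no cancellation analysis is needed.) The paper instead proves the stronger statement that a generic set is \emph{UB-generic}, hence contains a translate $\omega_n B_n$ of every ball (Proposition~\ref{prima}), whence $S^{-1}S=\mathbb F_X$ (Remark~\ref{oss}); Theorem~\ref{thA} then follows from Theorem~\ref{cor:epwp}. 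Your route is more elementary and self-contained, needing only classical genericity and its closure under translation and finite intersection; the paper's route buys the strictly stronger Theorem~\ref{thnew} (the hypothesis can be weakened from $S$ generic to $S$ UB-generic, where a density-positivity argument for $S\cap Sw^{-1}$ is no longer available and one must use the ball-translate characterization instead), which is the main point of introducing the Banach densities.
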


In particular, no further assumptions are made on the group: this result holds for groups which are not necessarily amenable, computably presented (as it instead was in \cite{Cavaleri/18}). Note that, as a byproduct, this theorem shows for the first time that
the generic EP  in the sense of \cite{Myas/11} does not depend on the choice of the finite set of generators.
There is a simple idea behind this claim: up to left (or right) translations, a generic set contains all information about the whole Word Problem. 
We formalize this concept by introducing and studying \emph{Banach densities} on free groups, densities that are, in a precise way, invariant under the action of an infinite sequence of translations. While the name of our densities refers to their  classical analogues on~$\mathbb Z$, the ideas leading to their definition and applications were partly inspired from the densities defined and studied by Solecki for any discrete group in \cite{soleckihaar}. 

They turn out to have other good invariance properties. For instance, the set of trivial words is negligible in a strong sense (cf.\ Theorem \ref{osse}), which is a fundamental feature for investigations in genericity problems \cite{generic02, gilman/10}. 
We actually prove the following stronger version of Theorem \ref{thA}, via the definition of \emph{Upper Banach generic (UB-generic) sets}; cf.\ Definition~\ref{str} and Theorem~\ref{cor:epwp}.

\begin{theor}\label{thnew}
   Let $\Gamma$ be  a finitely generated group. 
If there exists a finite set of generators $X$ of $\Gamma$ such that the Equality Problem is solvable on a set $S\times S$,  where $S\subset \mathbb F_X$ is UB-generic, then $\Gamma$ has solvable Word Problem.
  \end{theor}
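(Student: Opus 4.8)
The plan is to reduce the Word Problem of $\Gamma$ to queries of the partial Equality algorithm on $S\times S$, using a translation to push an arbitrary input word into the generic set $S$. Write $\pi\colon\mathbb F_X\to\Gamma$ for the canonical projection, so that solving WP means deciding, for a given $w\in\mathbb F_X$, whether $\pi(w)=1$. I read the hypothesis ``EP solvable on $S\times S$'' in the standard partial-algorithm sense: there is an algorithm $\mathcal A$ which never returns an incorrect verdict and which halts (with the correct answer) on every pair in $S\times S$; its behaviour off $S\times S$ is unconstrained except that it is never wrong.

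The key algebraic observation is a reduction identity. For $w,u\in\mathbb F_X$ let $\overline{wu}$ denote the reduced form of the product $wu$ in $\mathbb F_X$. Since $\pi$ is a homomorphism, $\pi(\overline{wu})=\pi(w)\pi(u)$, and hence $\pi(u)=\pi(\overline{wu})$ if and only if $\pi(w)=1$. Thus, for \emph{every} single $u$, the correct EP verdict on the pair $(u,\overline{wu})$ already encodes the WP verdict on $w$: the two words are equal in $\Gamma$ exactly when $w$ is trivial. Crucially, this holds uniformly in $u$, so it is irrelevant which $u$ we eventually use.

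Given $w$, the algorithm I would run is: enumerate $u\in\mathbb F_X$ and dovetail the executions of $\mathcal A$ on all pairs $(u,\overline{wu})$; output the verdict of the first execution that halts. Correctness is immediate, since $\mathcal A$ is never wrong and, by the identity above, its verdict on any $(u,\overline{wu})$ decides whether $\pi(w)=1$. The only thing to guarantee is \emph{termination}, i.e.\ that $\mathcal A$ halts on at least one such pair. For this it suffices to produce a single $u$ with $u\in S$ and $\overline{wu}\in S$: then $(u,\overline{wu})\in S\times S$ and $\mathcal A$ is guaranteed to halt on it. Equivalently, I must show $S\cap w^{-1}S\neq\emptyset$, where $w^{-1}S=\{u\in\mathbb F_X:\overline{wu}\in S\}$ is the left translate of $S$.

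This last point is exactly where the Upper Banach machinery enters, and it is the main obstacle. Because left translation by $w$ is a bijection of $\mathbb F_X$ that preserves the UB density, the translate $w^{-1}S$ is again UB-generic (cf.\ Definition~\ref{str}); and since the UB-null sets form an ideal, the complement of $S\cap w^{-1}S$ is UB-null, so $S\cap w^{-1}S$ is itself UB-generic and in particular nonempty (indeed infinite). This yields the required $u$ and closes the termination argument (the right-handed variant, using $\overline{uw}$ and right translation, works identically). I expect the delicate ingredients to be precisely the translation invariance of UB-genericity and the ideal property of the UB-null sets---the very features for which these densities were introduced---rather than the computability bookkeeping of the dovetailing. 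As a byproduct, the resulting WP algorithm makes no reference to the chosen generating set $X$, which gives the independence of generic EP from the generators noted above.
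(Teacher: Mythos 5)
Your overall reduction---dovetail the partial EP algorithm over pairs obtained by translating the input word, and use the density hypothesis only to guarantee that at least one queried pair lands in $S\times S$---is exactly the paper's strategy (Lemma~\ref{lem-epwp} combined with Remark~\ref{oss}). The gap is in the termination step. You deduce that $S\cap w^{-1}S\neq\emptyset$ from two claims: that translation preserves Upper Banach density (true), and that ``the UB-null sets form an ideal'' (false). The intersection of two UB-generic sets need not be UB-generic; it can even be empty. The paper makes this point explicitly: the sets $T_f$ and $T_f^c$ of Remark~\ref{oss} are both UB-generic (the latter because, for $f$ growing fast enough, it is generic, and genericity implies UB-genericity), yet they are disjoint. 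So knowing that $S$ and $w^{-1}S$ are each UB-generic tells you nothing about their intersection.

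Worse, the nonemptiness you need for your primary (left-handed) pairing genuinely fails. Having some $u\in S$ with $wu\in S$ is equivalent to $w\in SS^{-1}$, and $SS^{-1}$ need not be all of $\mathbb F_X$ for $S$ UB-generic: with $|X|\geq 2$, distinct $a,b\in X$ and $f(n)>n$, the set $S=\bigcup_n a^{f(n)}B_n$ is UB-generic by Proposition~\ref{prima}, but $b\notin SS^{-1}$, since $b=a^{f(n)}v\,u^{-1}a^{-f(m)}$ with $v\in B_n$, $u\in B_m$ would force the word $a^{-f(n)}ba^{f(m)}$, of reduced length $f(n)+f(m)+1$, to have length at most $n+m$. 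Only the right-handed variant survives, and for a reason different from the one you give: $S\cap Sw^{-1}\neq\emptyset$ is equivalent to $w\in S^{-1}S$, and $S^{-1}S=\mathbb F_X$ does hold for UB-generic $S$, because $S$ contains a translate $\omega_nB_n$ of every ball (Proposition~\ref{prima}), whence $S^{-1}S\supset B_n^{-1}\omega_n^{-1}\omega_nB_n=B_{2n}$; concretely, $(\omega_n,\omega_nw)\in S\times S$ as soon as $n\geq |w|$. This is precisely Remark~\ref{oss}, and it is the one place where the left/right asymmetry built into the definition of $\overline\mu$ (maxima over left translates $\omega B_n$) matters: your two ``identical'' variants are in fact not interchangeable.
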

This suggests that these new densities might  be interesting \emph{per se}: we investigate the class of groups having solvable WP on UB-generic sets and characterize them as follows, cf.~Theorem~\ref{principale}.
\begin{theor}\label{va}
  Let $\Gamma$ be  a group generated by a finite set $X$. Then
$\Gamma$ has  solvable UB-generic WP with respect to $X$ if and only if 
there exists an infinite computably enumerable sequence  of words in $\mathbb F_X$ representing elements of strictly increasing length in $\Gamma$.
\end{theor}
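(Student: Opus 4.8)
The plan is to prove the two implications separately, using throughout the following operative facts. Write $|w|$ for the length of $w\in\mathbb F_X$ in the free group, $|w|_\Gamma$ for the length of its image in $\Gamma$ with respect to $X$, and $B_r\subseteq\mathbb F_X$ for the ball of radius $r$; recall that $|w|_\Gamma\le|w|$ and that the image of $B_r$ is exactly the $\Gamma$-ball $B_r^\Gamma$. I read ``solvability of the WP on $S$'' in the never-err sense: there is a partial algorithm $\mathcal A$ that is correct whenever it halts and that halts on all of $S$. I also use that, equivalently by Definition~\ref{str} and the results preceding this theorem, a set is UB-generic precisely when it contains a sequence of translated balls $t_nB_{\rho_n}$ with $\rho_n\to\infty$, and that by Theorem~\ref{osse} the set $P_L=\{w:|w|_\Gamma\le L\}$ of words of bounded $\Gamma$-length is UB-negligible for every $L$ (it is a finite union of left cosets of the UB-negligible set of trivial words, and UB density is left-invariant and subadditive). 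We treat the main case $\Gamma$ infinite, the finite case being covered by the conventions of Section~\ref{notation}.

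For the implication ($\Leftarrow$), suppose $(w_n)_{n\ge1}$ is a computably enumerable sequence with $|w_1|_\Gamma<|w_2|_\Gamma<\cdots$, so that $|w_n|_\Gamma\ge n$. I set $S=\bigcup_{n\ge1}w_nB_{n-1}$. Since $S$ contains the translated balls $w_nB_{n-1}$ with radii $n-1\to\infty$, it is UB-generic. On $S$ the WP is solved by the following partial algorithm: on input $v$, enumerate $(w_n)$ and search for an index $n$ with $w_n^{-1}v\in B_{n-1}$; upon finding one, halt with output \emph{non-trivial}. This is correct: if $v=w_nu$ with $|u|\le n-1$ then $|v|_\Gamma\ge|w_n|_\Gamma-|u|\ge n-(n-1)=1$, so $v$ is genuinely non-trivial, and the algorithm halts on every element of $S$. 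Hence $\Gamma$ has solvable UB-generic WP.

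For the implication ($\Rightarrow$), fix a UB-generic $S$ and a never-err solver $\mathcal A$ halting on $S$, and let $H\supseteq S$ be the computably enumerable set of words on which $\mathcal A$ halts; then $H$ is again UB-generic (being a superset of $S$) and $\mathcal A$ is correct on all of $H$. I generate the required sequence by \emph{certifying} lower bounds on $\Gamma$-length. Set $m_1=1$ and, having produced $v_{k-1}$, set $m_k=|v_{k-1}|+1$. By dovetailing over $v\in\mathbb F_X$, search for a word $v$ such that $\mathcal A$ halts with output \emph{non-trivial} on every element of the finite set $vB_{m_k-1}$, and output the first such $v$ as $v_k$. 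When this succeeds, $vu^{-1}\ne_\Gamma 1$ for all $u\in B_{m_k-1}$, i.e.\ $[v]\notin B^\Gamma_{m_k-1}$, so $|v_k|_\Gamma\ge m_k=|v_{k-1}|+1>|v_{k-1}|_\Gamma$. Thus the $v_k$ form a computably enumerable sequence of strictly increasing $\Gamma$-length, as required.

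It remains to see that each search terminates, which is the crux of the argument: for every $m$ there exists $v$ with $vB_{m-1}\subseteq H$ and $|v|_\Gamma\ge m$ (then $\mathcal A$ halts on $vB_{m-1}\subseteq H$, and each output is \emph{non-trivial}, since every such element has $\Gamma$-length $\ge m-(m-1)=1$). To produce such a $v$, I use that $H$ contains translated balls $t_nB_{\rho_n}$ with $\rho_n\to\infty$. Since $\Gamma$ is infinite, $|B_K^\Gamma|\to\infty$, so I may fix $K$ with $|B_K^\Gamma|>|B_{m-1}^\Gamma|$ and then $n$ with $\rho_n\ge K+m-1$. The translate $[t_n]B_K^\Gamma$ has more than $|B_{m-1}^\Gamma|$ elements, hence is not contained in $B_{m-1}^\Gamma$; choosing $x\in B_K$ with $|t_nx|_\Gamma\ge m$ and setting $v=t_nx$ gives $vB_{m-1}\subseteq t_nB_{\rho_n}\subseteq H$, as needed. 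The substantive input here is exactly the translation-invariance built into the UB density (Definition~\ref{str}) together with the strong negligibility of bounded-length words (Theorem~\ref{osse}); controlling the interaction between the free-group ``boundary'' and the $\Gamma$-length along the translating sequence $(t_n)$ is where I expect the Banach-density machinery to do the real work.
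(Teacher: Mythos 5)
Your proposal is essentially correct and proves the equivalence $(1)\Leftrightarrow(4)$ of Theorem~\ref{principale} directly, whereas the paper factors it through the intermediate characterizations $(2)$ (a computably enumerable, UB-generic set disjoint from $\ker\pi$) and $(3)$ (a computably enumerable sequence with $|\omega_n|_\Gamma>n$). Your $(\Leftarrow)$ direction is the same construction as the paper's $(4)\Rightarrow(3)\Rightarrow(2)\Rightarrow(1)$ composite: take the union of translated balls $w_nB_{n-1}$ and solve the WP there by enumeration. Your $(\Rightarrow)$ direction genuinely differs. The paper passes from the halting set $S'$ to $S'\setminus\ker\pi$ and invokes Theorem~\ref{osse} (LB-negligibility of $\ker\pi$, proved via cogrowth estimates) to see that this set is still UB-generic, then extracts ball centers via Proposition~\ref{prima}. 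You instead keep the whole halting set $H$, use the algorithm's outputs to \emph{certify} that an entire translated ball $vB_{m-1}$ consists of non-trivial words, and guarantee termination of the search by an elementary counting argument: since $\Gamma$ is infinite, $|B_K^\Gamma|>|B_{m-1}^\Gamma|$ for large $K$, so any translate $[t_n]B_K^\Gamma$ must leave $B_{m-1}^\Gamma$. This bypasses Theorem~\ref{osse} entirely for this implication (your preamble cites it, and slightly misstates LB-negligible as UB-negligible, but the body of your argument never actually uses that fact); the only inputs are Proposition~\ref{prima} and the infinitude of $\Gamma$. That is a mild simplification, at the cost of not producing the reusable statement $(2)$, which the paper also wants for Corollary~\ref{coco} and for the comparison with algorithmic finiteness.

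One slip to repair: from $|w_1|_\Gamma<|w_2|_\Gamma<\cdots$ with non-negative integer values you may only conclude $|w_n|_\Gamma\ge n-1$ (the first term could represent the identity), not $|w_n|_\Gamma\ge n$. With your choice $S=\bigcup_n w_nB_{n-1}$ the estimate degrades to $|v|_\Gamma\ge(n-1)-(n-1)=0$, so the algorithm could wrongly declare a trivial word non-trivial. Reindexing fixes this, e.g.\ $S=\bigcup_n w_{n+2}B_n$, which is exactly the shift the paper performs in its $(4)\Rightarrow(3)$ step when it notes that $\{|\omega_{n+2}|_\Gamma\}$ is superlinear.
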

A first consequence of this characterization is that solvability of UB-generic WP does not depend on the choice of the finite generating set. Moreover the class of groups whose \emph{WP  is unsolvable on every UB-generic set} can only contain torsion groups. Another straightforward consequence of Theorem~\ref{va} is the following, cf. Corollary~\ref{pra}.
 \begin{cor}\label{thB}
Let $\Gamma$ be an infinite algorithmically finite group generated by a finite set $X$. The WP of $\Gamma$  is unsolvable on every UB-generic set.
\end{cor}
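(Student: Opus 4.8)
The plan is to derive the statement directly from Theorem~\ref{va} by contraposition. Suppose, toward a contradiction, that the WP of $\Gamma$ were solvable on some UB-generic set, so that $\Gamma$ had solvable UB-generic WP with respect to $X$. Then Theorem~\ref{va} would supply an infinite computably enumerable sequence of words $w_1, w_2, \dots$ in $\mathbb F_X$ whose images $g_1, g_2, \dots$ in $\Gamma$ have strictly increasing length.

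The key observation is that strict monotonicity of the lengths forces the $g_i$ to be pairwise distinct: elements of $\Gamma$ of different length in the word metric determined by $X$ cannot coincide, so $g_i \neq g_j$ whenever $i \neq j$. Enumerating the words $w_i$ therefore yields a computable enumeration of infinitely many pairwise distinct elements of $\Gamma$. This is exactly what algorithmic finiteness forbids, since by definition an infinite algorithmically finite group admits no computable enumeration of infinitely many pairwise distinct elements. The assumed sequence cannot exist, and by Theorem~\ref{va} the UB-generic WP of $\Gamma$ is unsolvable; equivalently, the WP of $\Gamma$ is unsolvable on every UB-generic set.

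I anticipate no substantial obstacle, as the corollary is little more than a reformulation of Theorem~\ref{va}. The single point meriting a word of care is the passage from a c.e.\ sequence of words to a c.e.\ enumeration of the \emph{distinct} group elements they represent; this is automatic, because the length hypothesis itself guarantees distinctness and no equality tests are required.
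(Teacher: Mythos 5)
Your proposal is correct and follows exactly the paper's own route: the paper's proof of this corollary is precisely that the computably enumerable sequence with strictly increasing $\Gamma$-lengths furnished by Theorem~\ref{va} (condition (4) of Theorem~\ref{principale}) contradicts the definition of algorithmic finiteness, since distinct lengths force pairwise distinct images. Your added remark that no equality tests are needed because distinctness is automatic is a correct and welcome clarification of the same argument.
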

Due to the exotic nature of algorithmically finite groups, we feel that their inclusion in a broader class of groups with nice and diverse characterizations can be helpful (cf.~Theorem~\ref{principale}).
Moreover, since  UB-genericity is a weaker notion than  classic genericity,  we prove that Dehn monsters also constitute the \emph{first example of computably presented groups with unsolvable generic Word Problem}.   This was obviously among the purposes of~\cite{Myas/11}, but there the emphasis was on the Equality Problem. In light of our results, it seems appropriate to turn the attention to partial WP, or at least to consider a different definition for partial EP.
In fact, proving Theorems~\ref{thA} and \ref{thnew} has required an analysis of the connection between Equality and Word Problem, which had sometimes been previously considered,  but not deeply unraveled. This analysis revealed that the odd behavior exhibited in Theorem~\ref{thA} is essentially due to the particular way of defining solvability of generic EP via \emph{Fubini-genericity}: taking a more classical definition such as the one outlined in~\cite{generic02}, we prove the expected equivalence between the two generic problems in Theorem~\ref{thm:wpep}.
\begin{theor}
Let $\Gamma$ be a group generated by a finite set $X$. Then $\Gamma$ has solvable generic EP with respect to $X$ in the sense of Definition~\ref{defEP} if and only if it has solvable generic WP with respect to $X$.
\end{theor}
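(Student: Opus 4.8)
The plan is to reduce each problem to the other through the single map $\mu\colon\mathbb F_X\times\mathbb F_X\to\mathbb F_X$, $\mu(u,v)=uv^{-1}$, which satisfies $u=_\Gamma v$ if and only if $\mu(u,v)=_\Gamma 1$. Along $\mu$ a correct partial algorithm for one problem transports to a correct partial algorithm for the other, so the entire content of the theorem is that this transport preserves genericity. The key geometric fact is the description of the fibres
\[
\mu^{-1}(w)=\{(wu,u):u\in\mathbb F_X\},
\]
together with the estimate that, writing $\lambda=2|X|-1$ for the growth rate of $\mathbb F_X$, one has $|\mu^{-1}(w)\cap(B_n\times B_n)|\asymp\lambda^{\,n-|w|/2}$ whenever $|w|\le 2n$. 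First I would record the elementary fact that on $\mathbb F_X$ a set $A$ of ball-density zero also has sphere-density zero: since $|B_n|\asymp|S_n|\asymp\lambda^n$, from $|A\cap B_n|=o(\lambda^n)$ we get $|A\cap S_n|\le|A\cap B_n|=o(\lambda^n)=o(|S_n|)$. This lets me pass freely between balls and spheres in the counting below.

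For the direction WP $\Rightarrow$ EP, let $S=\mathrm{dom}(\mathcal A)$ be the generic halting set of a correct partial WP-algorithm $\mathcal A$. On input $(u,v)$ I run $\mathcal A$ on the reduced word $uv^{-1}$ and return its answer; this is correct whenever it halts, with halting set $\mu^{-1}(S)$, so I must show that $\mu^{-1}(S^c)$ is negligible in $\mathbb F_X\times\mathbb F_X$. Writing $a_m=|S^c\cap S_m|=\eta_m\lambda^m$ with $\eta_m\to0$ (by the first step) and using the fibre count,
\[
|\mu^{-1}(S^c)\cap(B_n\times B_n)|\;\asymp\;\sum_{m=0}^{2n} a_m\,\lambda^{\,n-m/2}\;=\;\lambda^n\sum_{m=0}^{2n}\eta_m\,\lambda^{\,m/2}.
\]
The inner sum is dominated by its top end, where $\eta_m$ is small, so splitting at a cut-off $M$ bounds it by $\varepsilon(M)\lambda^{2n}+C_M\lambda^n$ with $\varepsilon(M)=\sup_{m\ge M}\eta_m\to0$; dividing by $|B_n\times B_n|\asymp\lambda^{2n}$ and letting first $n\to\infty$ and then $M\to\infty$ yields density zero. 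Hence $\mu^{-1}(S)$ is generic and EP is solvable generically.

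The harder direction is EP $\Rightarrow$ WP, and here the obvious reduction fails: feeding an EP-algorithm the pairs $(w,1)$, with $1$ the empty word, is useless because this slice is negligible and the generic halting set $T=\mathrm{dom}(\mathcal B)$ need not meet it. Instead I would dovetail: on input $w$, run $\mathcal B$ in parallel on all pairs $(wu,u)$, $u\in\mathbb F_X$, and output the first answer obtained. Every such pair is $=_\Gamma$-equivalent to the instance ``$w=_\Gamma1$'', so the algorithm is correct, and it halts exactly on $\{w:\mu^{-1}(w)\cap T\ne\emptyset\}$. Its failure set is therefore $W=\{w:\mu^{-1}(w)\subseteq T^c\}$, and I must prove $W$ negligible. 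Summing the fibre inequality over $w\in W\cap S_m$, whose (disjoint) fibres all lie in $T^c$, gives
\[
|W\cap S_m|\cdot c\,\lambda^{\,n-m/2}\;\le\;|\mu^{-1}(W\cap S_m)\cap(B_n\times B_n)|\;\le\;|T^c\cap(B_n\times B_n)|=\varepsilon_n\lambda^{2n},
\]
with $\varepsilon_n\to0$. The decisive move is the choice $n=\lceil m/2\rceil$, which places the length-$m$ words at the very top of the product ball, exactly where the fibre mass concentrates: then $\lambda^{\,n-m/2}=O(1)$ and $|W\cap S_m|/|S_m|\lesssim\varepsilon_{\lceil m/2\rceil}\to0$, so $W$ is negligible and WP is solvable generically.

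I expect the main obstacle to be precisely this last counting step, for two reasons. First, the lower bound $|\mu^{-1}(w)\cap(B_n\times B_n)|\gtrsim\lambda^{\,n-|w|/2}$ must come from the \emph{balanced} factorisations $w=ab^{-1}$ with $|a|=\lceil|w|/2\rceil$ — that is, from pairs $(ac,bc)$ in which $u=bc$ cancels half of $w$; the naive cancellation-free pairs $(wu,u)$ number only $\lambda^{\,n-|w|}$ and are far too few to drive the argument. Second, one must select the radius $n$ as a function of $m$ so that $\varepsilon_n$ is still evaluated along $n\to\infty$, and $n=\lceil m/2\rceil$ does both at once. Everything else is bookkeeping on the free group, and crucially no amenability, computable presentation, or other hypothesis on $\Gamma$ enters — in sharp contrast with the Fubini-type reduction behind Theorem~\ref{thA}.
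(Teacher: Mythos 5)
Your overall strategy coincides with the paper's: transport algorithms along the difference map (the paper's $\tau(\omega_1,\omega_2)=\omega_1^{-1}\omega_2$ and Lemma~\ref{lem-epwp}, whose proof is exactly your dovetailing over the pairs $(v,v\omega)$), and reduce the transfer of genericity to a count of the fibres of that map, proved in one direction by splitting the sum over lengths at a cut-off $M$ and in the other by a ``top of the ball'' lower bound. Your observation that the fibre mass sits on the balanced factorisations of $w$ is precisely the content of the paper's Lemma~\ref{nuovo}. So the skeleton is right, and both of your counting arguments are sound for the balls you use.

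The one genuine problem is which balls you use. Definition~\ref{defEP} refers to Equation~\eqref{def:gen} with $k=2$, where $B_n(\mathbb F_X^2)$ is taken with respect to the standard generators of the direct square, i.e.\ $|(\omega_1,\omega_2)|=|\omega_1|+|\omega_2|$, so that $|B_n(\mathbb F_X^2)|\asymp (n+1)\alpha^n$. You instead compute every density with respect to $B_n\times B_n$, the ball for $\max\{|\omega_1|,|\omega_2|\}$, of size $\asymp\alpha^{2n}$. Since $B_n(\mathbb F_X^2)$ occupies an exponentially small proportion of $B_n\times B_n$, genericity for the two ball structures is not equivalent, and as written you prove a variant of the theorem rather than the stated one. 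The repair is mechanical and keeps both of your key moves: with the $\ell_1$-ball the fibre over $s\in S_k$ becomes the set $P(s,n)=\{\omega:|\omega|+|\omega s|\le n\}$ of Lemma~\ref{nuovo}, of size at most $(k+1)|B_{(n-k)/2}|$ and exactly $n+1$ when $k=n$; the cut-off splitting then gives WP $\Rightarrow$ EP, and taking the top sphere $k=n$ (in place of your choice $n=\lceil m/2\rceil$) gives EP $\Rightarrow$ WP. This is what the paper's Lemma~\ref{lem:fg} does, and its closing remark confirms that your max-ball version also holds by exactly the argument you give, with $B_n\cap B_ns$ a ball of radius $n-|s|/2$ about the midpoint of the geodesic. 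A minor further point: your closing contrast is off, since Theorem~\ref{thA} likewise requires no amenability or computable-presentation hypotheses.
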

Even if decidability of the generic EP is equivalent to that of the generic WP, we do not know if this is the case for the complexity: the partial relations obtained by our proofs are presented in Corollary~\ref{cor:comp} (see Remark~\ref{rem-epwp-comp2} for the analogous discussion about Theorem~\ref{thA}).

We devote the final part of this paper to  taking a unifying look at these new and old classes of groups, defined according to the increasing level of (un)solvability of the partial WP, asking a few questions on the still unknown relations among them.

\subsection{Notation and preliminaries}\label{notation}
Throughout this paper, $\Gamma$ is a finitely generated group. For a finite set of generators $X$ we set $\lvert X\rvert =d$. We denote by $\pi :\mathbb F_X \to \Gamma$ the canonical epimorphism from the free group on $X$  to $\Gamma$.  The normal subgroup $\ker \pi\lhd \mathbb F_X$ of \emph{trivial} words is often  called the \emph{Word Problem} of $\Gamma$.
 We denote by $\lvert g \rvert_\Gamma$ the word length of $g$ in $\Gamma$
  with respect to $X$, for $\omega\in \mathbb F_X$  
  we simply write $|\omega|$ instead of $|\omega|_{\mathbb F_X}$. 
 Note that $\lvert g \rvert_\Gamma=\min\{\lvert \omega\rvert: \omega\in\mathbb F_X, \pi(\omega)=g\}$. For the $k$-th direct power $\mathbb F_X^k$ of the free group $\mathbb F_X$ we will consider the usual generators, so that    $|(\omega_1,\ldots, \omega_k)|_{\mathbb F_X^k}=|\omega_1|+\cdots+|\omega_k|$.
  We denote with~$S_n(\Gamma)$ the sphere  and  with~$B_n(\Gamma)$ the
  ball of radius $n$ in $\Gamma$, respectively. For the free group we
  simply write $S_n$  instead of $S_n(\mathbb F_X)$ and $B_n$ instead
  of $B_n(\mathbb F_X)$. We let $e$ denote the empty word in  $\mathbb F_X$.

A set $S\subset \mathbb F_X^k$ is \emph{negligible} if
\begin{equation}\label{def:gen}
  \lim_{n\to\infty}\frac{\lvert S\cap B_n(\mathbb F_X ^k)\rvert}{\lvert B_n(\mathbb F_X ^k)\rvert}=0,\end{equation}
and \emph{generic} if its complement in $\mathbb F_X ^k$ is negligible. The set $S$ is \emph{exponentially negligible} if it is negligible and the convergence in~\eqref{def:gen} is exponential, that is 
$\beta^n\frac{\lvert S\cap B_n(\mathbb F_X ^k)\rvert}{\lvert B_n(\mathbb F_X ^k)\rvert}\to 0$, for some $\beta>1$.
We say in this case that the complement is    \emph{exponentially generic} (see also~\cites{generic02,gilman/10}).
We will  call \emph{Fubini-generic} the special generic subsets of $\mathbb F^2 _X$ of the form  $S\times S \subset  \mathbb F^2_X$, with $S$ generic in~$ \mathbb F_X$.

Setting $\alpha:=2d-1$, an easy computation ensures that, when $d>1$, there exist positive constants $c_2,c_s,C_1,C_2$ such that 
\begin{equation}\label{palla}
\begin{split}
|S_n|= c_s \alpha^n, \quad& \qquad
 \alpha^n\leq |B_n|\leq C_1 \alpha^n,\\
 c_2 (n+1) \alpha^n\leq & |B_n(\mathbb F_X^2)|\leq C_2 (n+1) \alpha^n.
\end{split}
\end{equation}
This implies, together with Cesaro-Stolz, that $S\subset \mathbb F_X$ is (exponentially) negligible
if and only the sequence $\frac{|S\cap S_n|}{|S_n|}$ (exponentially)
tends to zero. This sequence is sometimes used in the literature to define, in an equivalent way, generic/negligible subsets of $\mathbb F_X$.

 For definitions and basic facts on algorithms we refer to~\cite{Cooper}. The group $\Gamma$ has \emph{solvable Word Problem (WP) on a subset $S\subset \mathbb F_X$} if there exists a partial algorithm that stops at least for every $\omega\in S$, and, if it stops, it establishes whether $\omega$ is trivial or not.
The group $\Gamma$ has \emph{solvable WP} with respect to $X$ if it has solvable WP on  $\mathbb F_X$; it has \emph{solvable generic WP} with respect to $X$ if it has solvable WP on a generic subset  $S\subset \mathbb F_X$. The group $\Gamma$ has \emph{solvable Equality Problem (EP) on a subset $T\subset \mathbb F^2_X$}  if there exists a partial algorithm that stops at least for every $(\omega_1,\omega_2)\in T$, and establishes if $\pi(\omega_1)=\pi(\omega_2).$ The group $\Gamma$ has \emph{ solvable Fubini-generic EP} with respect to $X$ if it has solvable EP on a Fubini-generic subset of~$\mathbb F^2_X$; notice that this is exactly the definition of generic EP in the sense of~\cite{Myas/11}.

All the previous generic problems can be stated in  \emph{exponentially
  generic} versions. Also, in all the previous problems we can replace
\emph{solvability} with \emph{solvability in a (time) complexity class
  $C$}, see \cite{generic02} for formal definitions and details.
Notice that we will work with freely reduced words, that is words in
$\mathbb F_X$ instead of $(X\cup X^{-1})^*$. This is not relevant in our setting, since we  assume, from now on, that  complexity classes are defined by
collections of complexity bounds which are  also closed  under addition of linear functions.

Not much is known about the invariance of generic solvability problems under change of the finite generating set $X$ (see  \cite[Section~3]{generic02}). Clearly solvability of the whole WP does not depend on $X$; as a consequence,  thanks to our Theorem \ref{thA}, solvability of Fubini-generic EP does not depend on the choice of the finite set of generators either.

\begin{acknow}
  Both authors gratefully acknowledge support from the Erwin Schr\"odinger International Institute for Mathematics and Physics at the University of Vienna in the form of  Junior Research Fellowships. 
  AC was partially supported by the Irish Research Council through grant no.\ GOIPD/2018/319.
  MC would also like to thank the Faculty of Mathematics at the University of Vienna for the warm hospitality.
  We thank Goulnara Arzhantseva, Christopher Cashen, Tullio
  Ceccherini-Silberstein, Ilya Kapovich, Tobias Rossmann and  Paul Schupp for
  mathematical discussions and helpful comments. We are grateful to
  the referee for comments and suggestions which helped improving this manuscript.
\end{acknow}


\section{Upper Banach generic Word Problem}\label{sec:ubgwp}
 We give our main  definition concerning densities of subsets of free groups. 
\begin{definition}\label{str}
Let $S$ be a subset of $\mathbb F_X$. We define the Lower and the Upper Banach densities ($\underline\mu$ and $\overline \mu$, respectively) of $S$ as:
\begin{align*}
\underline{\mu}(S):=\liminf_{n\to\infty}  \min_{ \omega\in \mathbb F_X }  \frac{|S\cap \omega B_n|}{|B_n|},&\quad \overline{\mu}(S):=\limsup_{n\to\infty}  \max_{ \omega\in \mathbb F_X } \frac{|S\cap \omega B_n|}{|B_n|}.
\end{align*}
A set $S\subset\mathbb F_X$ is \emph{Upper Banach generic} (\emph{UB-generic} for short) if $\overline{\mu}(S)=1$. A set $N\subset\mathbb F_X$ is \emph{UB-negligible} if its complement is UB-generic, that is if $\underline \mu(N)=0$. Similarly, a set $S\subset\mathbb F_X$ is \emph{Lower Banach generic} (\emph{LB-generic} for short) if $\underline{\mu}(S)=1$ and $N\subset\mathbb F_X$ is \emph{LB-negligible} if its complement is LB-generic, that is if $\overline \mu(N)=0$.
\end{definition}

The following proposition characterizes UB-generic subsets of free groups as those containing translates of any balls, and thus of any finite sets. 
\begin{propo}\label{prima}
A subset $S\subset \mathbb F_X$ is UB-generic if and only if for all $n\in \mathbb N$ there exists $\omega_n\in \mathbb F_X$ such that $\omega_n B_n\subset S$.
\end{propo}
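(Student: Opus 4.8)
The plan is to prove the two implications separately, the reverse (sufficiency) direction being the substantial one.

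For the easy direction, suppose that for every $n$ there is $\omega_n\in\mathbb F_X$ with $\omega_n B_n\subset S$. Since left translation is a bijection we have $|\omega_n B_n|=|B_n|$, and $\omega_n B_n\subset S$ forces $S\cap\omega_n B_n=\omega_n B_n$, so that $\frac{|S\cap\omega_n B_n|}{|B_n|}=1$. As the ratio $\frac{|S\cap\omega B_n|}{|B_n|}$ never exceeds $1$, the inner maximum over $\omega\in\mathbb F_X$ equals $1$ for every $n$, whence $\overline{\mu}(S)=1$ and $S$ is UB-generic.

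For the converse, fix $n\in\mathbb N$; the goal is to produce a single translate $\omega_n B_n\subset S$. Writing $S^c:=\mathbb F_X\setminus S$, from $\overline{\mu}(S)=1$ I would extract a subsequence of radii $m$ together with translates $\omega=\omega^{(m)}$ such that $\epsilon_m:=\frac{|S^c\cap\omega B_m|}{|B_m|}\to 0$. The core is then a packing argument inside the large ball $\omega B_m$. Take as candidate centres all $v\in\omega B_{m-n}$; the triangle inequality gives $vB_n\subset\omega B_m$ for each such $v$, so only bad points lying in $\omega B_m$ can matter, and there are exactly $|B_{m-n}|$ candidates. A candidate $v$ is \emph{spoiled} precisely when $vB_n$ meets $S^c$; since $b\in vB_n$ is equivalent to $v\in bB_n$ (using the symmetry $B_n=B_n^{-1}$), each bad point $b\in S^c\cap\omega B_m$ spoils at most $|bB_n|=|B_n|$ centres. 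Hence at most $|B_n|\cdot|S^c\cap\omega B_m|$ candidates are spoiled, and if $|B_{m-n}|>|B_n|\cdot|S^c\cap\omega B_m|$ then some centre survives, yielding $vB_n\subset S$.

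It remains to verify this inequality along the chosen subsequence. Dividing by $|B_m|$, it reads $\frac{|B_{m-n}|}{|B_m|}>|B_n|\,\epsilon_m$. By the growth estimates \eqref{palla} the left-hand side is bounded below by the positive constant $\frac{1}{C_1\alpha^{n}}$, which depends only on the fixed $n$, while the right-hand side tends to $0$ because $|B_n|$ is fixed and $\epsilon_m\to 0$; so the inequality holds for all large $m$ in the subsequence. I expect the main obstacle to be exactly this reverse direction and, within it, the packing estimate: one must guarantee that the number of radius-$n$ balls fitting inside $\omega B_m$ (counted here by centres in $\omega B_{m-n}$) genuinely outgrows the number a sparse complement can destroy. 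The two structural facts that make this go through are the symmetry $B_n=B_n^{-1}$, which caps how many centres a single bad point can spoil, and the fact that the ratio $|B_{m-n}|/|B_m|$ stays bounded away from $0$.
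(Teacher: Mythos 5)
Your proof is correct. The forward direction is the same as the paper's (both are immediate from $|S\cap\omega_nB_n|=|B_n|$). For the converse, both you and the authors run a packing argument inside a large translated ball, but the implementations genuinely differ. The paper argues by contradiction: if no translate of $B_n$ lies in $S$, then each of the $|S_{m-2n}|$ \emph{disjoint} translates of $B_n$ packed into any $\omega B_m$ (a fact imported from \cite[Lemma~5.3]{Cavaleri/18}) meets $S^c$, which bounds $\underline{\mu}(S^c)$ away from $0$ and contradicts UB-negligibility of the complement. You instead argue directly from $\overline{\mu}(S)=1$: along a subsequence of radii the relative density of $S^c$ in a suitable translate $\omega B_m$ tends to $0$, and a double count --- each bad point $b$ spoils only the $|B_n|$ centres $v\in bB_n$, thanks to $B_n=B_n^{-1}$ --- shows the $|B_{m-n}|$ candidate centres cannot all be spoiled. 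Your route replaces the auxiliary disjoint-translates lemma with an elementary overlapping-balls count, at the cost only of an irrelevant constant. One small caveat: the bound $|B_{m-n}|/|B_m|\geq 1/(C_1\alpha^{n})$ you draw from \eqref{palla} is stated in the paper only for $d>1$; for $d=1$ the ratio $(2(m-n)+1)/(2m+1)$ is still bounded below for large $m$, so your argument covers that case as well (the paper handles $d=1$ by a separate classical remark), but you should note it explicitly.
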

\begin{proof}
The existence of a sequence $\{\omega_n\}_{n\in\mathbb N}\subset \mathbb F_X$ such that  $\omega_n B_n\subset S$ for all $n$ clearly implies the UB-genericity of $S$. For the converse, suppose $S$ is UB-generic. We denote by $N:=S^c$ the complement of $S$, which is UB-negligible. Suppose by contradiction that there exists $k\in\mathbb N$ such that $\omega B_k\not\subset S$ (equivalently, 
$\omega B_k\cap N\neq \emptyset$) for all $\omega\in \mathbb F_X$. One can check that (see, for instance, \cite[Lemma~5.3]{Cavaleri/18}), for $n$ big enough, the ball of radius $n$ contains $|S_{n-2k}|$ disjoint translates of $B_k$:
$$B_n\supset \bigsqcup_{i=1}^{|S_{n-2k}|} \omega_i B_k,$$
and then, for every $\omega\in\mathbb F_X$ we have 
$\omega B_n\supset \bigsqcup_i \omega\omega_i B_k.$
Since $N$ contains at least a word for each translate of $B_k$, we have $|N\cap \omega B_n| \geq |S_{n-2k}|$, independently of~$\omega$. Then if $d>1$, by Equation \eqref{palla}
$$ \min_{\omega\in \mathbb F_X}  \frac{|N\cap \omega B_n|}{|B_n|}  \geq \frac{ |S_{n-2k}|}{|B_n|}\geq  \frac{ c_s \alpha^{n-2k}}{C_1\alpha^n}>0,$$ that is impossible since $\underline \mu(N)=0$. The case $d=1$ is actually a classical result (see \cite[Lemma~1]{sumset}), in our setting it is enough to notice that $\omega B_n$ contains  $\lfloor \frac{n}{k}\rfloor\sim \frac{|B_n|}{2k}$ disjoint translates of $B_k$.
\end{proof}

\begin{remark}\label{oss} It follows from the above proposition that if  $S$ is UB-generic, then 
 $$ S^{-1}S\supset (\bigcup \omega_n B_n)^{-1} (\bigcup \omega_n B_n)\supset \mathbb F_X.$$ It is clear from  Definition~\ref{str} that  UB-genericity is weaker than genericity, which is in turn   weaker than LB-genericity. For a fixed non-trivial word $\omega \in\mathbb F_X$ and some $f:\mathbb N \to \mathbb N$, define   ${T_f:=\bigcup_{n=1}^\infty \omega^{f(n)}B_n}$  (analogous sets were  considered, for instance, in  \cite[Remark 5.4]{Cavaleri/18}). 
 The set  $T_f$ is always UB-generic but, choosing $f$ growing fast enough, it is also negligible. Conversely,  the set $T^c_f$ is  always  non-LB-generic and, for the chosen $f$, it is also generic.
 Moreover, it is an easy exercise to define a set $S$ such that $S^{-1}S=\mathbb F_X$ and $|S\cap S_n|\leq 1$ for all $n$. The last property ensures that such a set is not UB-generic. To summarize,  the following hold.
$$ S  \mbox{ LB-generic } \quad_{\not\Longleftarrow }^{\Longrightarrow}   \quad S  \mbox{ generic } \quad _{\not\Longleftarrow }^{\Longrightarrow} \quad S  \mbox{ UB-generic }\quad _{\not\Longleftarrow }^{\Longrightarrow}  \quad S^{-1}S=\mathbb F_X.$$
\end{remark}

\begin{definition}\label{def:UB} Let $\Gamma$ be a group generated by a finite set $X$. We say that  $\Gamma$ has \emph{solvable UB-generic  WP} with respect to $X$ if it has solvable WP on an UB-generic subset of $\mathbb F_X$.
\end{definition}
 Note that our Theorem \ref{principale} implies, a posteriori, that this definition does not depend on the choice of the finite set of generators; cf. Corollary \ref{coco}.

It is well known that if $\Gamma$ is infinite, the set $\ker \pi \subset \mathbb F_X$, i.e.\ the set of the Word Problem, is negligible. For this reason, in order to study  generic Word Problem, one can restrict the attention to the behavior of an algorithm on the non-trivial words. On the other hand, in the investigation of UB-generic WP, the negligibility of $\ker \pi$  is not enough, essentially because the intersection of an UB-generic set and a generic set can  even be empty (e.g.\ the sets $T_f$ and $T^c_f$ in Remark~\ref{oss}). This is not the case for the intersection of an UB-generic set and a LB-generic set: one can easily check that this intersection is always UB-generic. The next theorem establishes that, if $\Gamma$ is infinite, the set of trivial words is not only negligible, but also LB-negligible, thus ensuring that a set $S$ is UB-generic if and only if $S\setminus \ker \pi$ is UB-generic.

\begin{theo}\label{osse}
Let  $\Gamma$ be an infinite  group  generated by a finite set $X$ and  let $\pi:\mathbb F_X\to \Gamma$ denote the canonical projection.   Then $\overline{\mu}(\ker \pi)=0$. Equivalently, $\ker \pi$ is LB-negligible.  \end{theo}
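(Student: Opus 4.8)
The plan is to turn $\overline\mu(\ker\pi)$ into a non-concentration estimate for the pushforward of the uniform measure on spheres of $\mathbb F_X$, and to isolate the single genuinely analytic step. First I would reformulate the quantity. Since $\pi$ is surjective, for every $\omega\in\mathbb F_X$ the left translation $w\mapsto\omega^{-1}w$ is a bijection carrying $\ker\pi\cap\omega B_n$ onto $\{b\in B_n:\pi(b)=\pi(\omega)^{-1}\}=\pi^{-1}(\pi(\omega)^{-1})\cap B_n$, so that $\max_{\omega}|\ker\pi\cap\omega B_n|=\max_{h\in\Gamma}|\pi^{-1}(h)\cap B_n|$ and
\[
\overline\mu(\ker\pi)=\limsup_{n\to\infty}\frac{1}{|B_n|}\max_{h\in\Gamma}|\pi^{-1}(h)\cap B_n|.
\]
(Equivalently, as $\ker\pi$ is normal, every fibre is a left translate of $\ker\pi$ and $\overline\mu$ is left-invariant, so all fibres share the same upper Banach density.) The goal thus becomes: the largest fibre of $\pi$ inside $B_n$ has size $o(|B_n|)$. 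The case $d=1$ is immediate, since then $\mathbb F_X\cong\mathbb Z$ and $\ker\pi=\{e\}$, so I assume $d>1$.

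Next I would pass from balls to spheres. Writing $|\pi^{-1}(h)\cap B_n|=\sum_{m\le n}|\pi^{-1}(h)\cap S_m|$ and setting $\epsilon_m:=\max_{h}|\pi^{-1}(h)\cap S_m|/|S_m|$, one obtains $\max_h|\pi^{-1}(h)\cap B_n|\le\sum_{m\le n}\epsilon_m|S_m|$. Because $|S_m|\asymp\alpha^m$ and $|B_n|\asymp\alpha^n$ by \eqref{palla}, a Cesàro–Stolz (Toeplitz) argument with geometric weights shows that $\epsilon_m\to0$ forces $\tfrac{1}{|B_n|}\sum_{m\le n}\epsilon_m|S_m|\to0$. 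So everything reduces to the sphere statement $\epsilon_m\to0$.

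The crux is this sphere estimate. I would read $p_m(h):=|\pi^{-1}(h)\cap S_m|/|S_m|$ as the law of the endpoint $\pi(W)$ of a uniformly random reduced word $W$ of length $m$, that is, of a non-backtracking random walk on $\Gamma$; the claim is $\|p_m\|_\infty\to0$. The natural bound is $\|p_m\|_\infty\le\|p_m\|_2$, so I would study the coincidence probability $\|p_m\|_2^2=\sum_h p_m(h)^2=\mathbb P[\pi(W)=\pi(W')]=\mathbb P[W^{-1}W'\in\ker\pi]$ for two independent samples $W,W'$, and show it tends to $0$. This is where infiniteness of $\Gamma$ must enter essentially, through the spreading of $p_m$ over $\pi(S_m)$, whose cardinality is comparable to $|B_m(\Gamma)|\to\infty$: the averaging operator governing the walk admits no nonzero invariant $\ell^2(\Gamma)$-vector when $\Gamma$ is infinite, so the associated coincidence/return probabilities vanish, giving $\epsilon_m\to0$.

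The hard part is exactly this last step, and I expect it to be the main obstacle, for a structural reason: balls and spheres in $\mathbb F_X$ are \emph{not} F\o lner sets — translating $B_n$ by a single generator already moves a constant fraction $\asymp|S_n|/|B_n|$ of its mass — so the \emph{maximal} fibre cannot be controlled by any naive averaging or translation argument. Concretely, the disjoint-translates estimate used in the proof of Proposition~\ref{prima} only yields $\tfrac{1}{|B_n|}\max_h|\pi^{-1}(h)\cap B_n|\le C_1\alpha^k/|B_k(\Gamma)|$, a quantity that stays $\ge 1$ for every $k$ (since $|B_k(\Gamma)|\le C_1\alpha^k$) and is therefore useless. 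Nor does mere negligibility of $\ker\pi$ suffice: when $\Gamma$ is amenable — already for $\Gamma=\mathbb Z$ on two generators — the set $\ker\pi$ is negligible but not exponentially so, and $\epsilon_m$ decays only at the local-limit rate $\asymp m^{-1/2}$. This rules out a purely combinatorial or injectivity-based bound and makes the vanishing of the coincidence probability, i.e. the genuine spreading forced by $|\Gamma|=\infty$, the indispensable ingredient.
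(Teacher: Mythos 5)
Your outer structure coincides with the paper's: the reduction from balls to spheres via a weighted Ces\`aro--Stolz argument, and the observation that $\lvert\ker\pi\cap\omega S_m\rvert$ depends only on $\pi(\omega)$ (so that the maximum over $\omega$ is a maximum over fibres), are exactly the first two layers of the published proof. You have also correctly located the crux: everything hinges on $\epsilon_m:=\max_h\lvert\pi^{-1}(h)\cap S_m\rvert/\lvert S_m\rvert\to 0$. At that point the paper simply invokes \cite[Theorem 2]{Woess/83}: the cogrowth $\gamma\in(\sqrt{2d-1},\,2d-1]$ of \cite{Gri/77} satisfies $\lvert\ker\pi\cap\omega S_n\rvert/\gamma^n\to 0$ \emph{uniformly} in $\pi(\omega)$, and since $\gamma\leq 2d-1$ this immediately gives $\epsilon_m\to 0$. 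Your replacement for this citation --- the bound $\lVert p_m\rVert_\infty\leq\lVert p_m\rVert_2$ followed by a vanishing-coincidence-probability argument --- is an attractive alternative, because the $\ell^2$ bound converts the uniform-in-$h$ statement into a statement at the identity for free.

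The gap is in the last step. The principle you appeal to (``no invariant $\ell^2$-vector, hence return probabilities vanish'') is a spectral-theorem statement about \emph{powers of a single self-adjoint Markov operator}; but $p_m$ is the law of the endpoint of a \emph{non-backtracking} walk, which is not $\mu^{*m}$ for any fixed $\mu$, so the principle does not apply off the shelf. To make it rigorous you would need to write the uniform measure $\sigma_m$ on $S_m\subset\mathbb F_X$ as $Q_m(\sigma_1)$ for the spherical (Chebyshev-type) polynomials of the $2d$-regular tree, so that $\lVert p_m\rVert_2^2=\langle Q_m(A)^2\delta_e,\delta_e\rangle$ with $A$ the simple-random-walk operator on $\Gamma$, and then control $\int Q_m(t)^2\,d\nu(t)$ using both the decay of $Q_m$ on $(-1,1)$ and the absence of atoms of the spectral measure $\nu$ at $\pm 1$ for infinite $\Gamma$. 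That analysis is precisely the content of the cogrowth machinery of \cite{Gri/77}, \cite{Cohen/82} and \cite{Woess/83} that the paper cites; as written, your proposal asserts the hard step rather than proving it (and the surrounding heuristics, e.g.\ that $\lvert\pi(S_m)\rvert\to\infty$ forces spreading, do not by themselves exclude concentration of $p_m$ on a single fibre). So: same skeleton, correct identification of the difficulty, but the decisive estimate still needs either the explicit spherical-function computation or the citation the paper uses.
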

\begin{proof}
If $X$ consists of a single generator the claim is clear since $\Gamma$ must be cyclic and $\ker \pi$ is trivial; we assume $d>1$.
Let $\omega \in\mathbb F_X$ with $\pi(\omega)=g$. Note that if  $\omega' \in\mathbb F_X$ is such that $\pi(\omega')=g$, then $\lvert \ker \pi\cap  \omega S_n \rvert=\lvert \ker \pi\cap \omega' S_n \rvert$, and thus this quantity does not depend on the choice of representatives of $g$.
Denoting by  $\gamma\in (\sqrt{2d-1}, 2d-1]$ the \emph{cogrowth} of $\Gamma$ (cf.~\cite{Gri/77}), the ratio $\frac{\lvert \ker \pi\cap \omega S_n \rvert }{\gamma^n}$ tends to zero  uniformly for $g\in \Gamma$ (cf.~\cite[Theorem 2]{Woess/83}).
Therefore, we have  $\lim_{n\to\infty}{\max_{\omega\in \mathbb F_X}  \frac{|\ker\pi \cap \omega S_n|}{|S_n|}}= 0$, since 
\begin{equation}\label{cogrowth}
\frac{\lvert \ker \pi\cap \omega S_n \rvert }{\lvert S_n\rvert}=\frac{\lvert \ker \pi\cap \omega S_n \rvert }{\gamma^n} \frac{\gamma^n}{|S_n|}
\end{equation}
and, being $\gamma\leq 2d-1$, the sequence $\left\{\frac{\gamma^n}{|S_n|}\right\}$ is uniformly bounded.

  By Cesaro-Stoltz, also $$\lim_{n\to\infty}\frac{\sum_{i=0}^n \max_{\omega\in \mathbb F_X}  |\ker\pi \cap \omega S_i|}{\sum_{i=0}^n {|S_i|}}= 0.$$    
  The result follows, as $$\max_{\omega\in \mathbb F_X}  \frac{|\ker\pi \cap \omega B_n|}{|B_n|}\leq \frac{\sum_{i=0}^n \max_{\omega\in \mathbb F_X}  |\ker\pi \cap \omega S_i|}{\sum_{i=0}^n {|S_i|}}.$$ \qedhere
\end{proof}
\begin{remark}
If $\Gamma$ is infinite, by combining  \cite[Theorem 2]{Woess/83}, Equation \eqref{cogrowth} and the cogrowth criteria \cites{Gri/77,Cohen/82}, we get that $\ker \pi$ is \emph{exponentially LB-negligible} if and only if $\Gamma$ is non-amenable.
  On the other hand, an UB-negligible set $S$ is always \emph{exponentially UB-negligible}, since in  light of Proposition~\ref{prima} the sequence $\left\{ \min_{ \omega\in \mathbb F_X }  \frac{|S\cap \omega B_n|}{|B_n|}\right\}_{n\in\mathbb N}$ of Definition~\ref{str}  is eventually~$0$.
\end{remark}

For our comparisons between Equality and Word Problems,  we need to switch between subsets of $\mathbb F_X ^2$ and of $\mathbb F_X $. To this purpose we define the following map
\begin{equation}\label{eq:p}
  \tau \colon \mathbb F_X\times \mathbb F_X \to \mathbb F_X,\,(\omega_1,\omega_2)\mapsto \omega_1^{-1}\omega_2.\end{equation}

\begin{lemma}\label{lem-epwp} Let $\Gamma$ be a group generated by a finite set $X$. Then  
$\Gamma$ has solvable Equality Problem on a set $T\subset \mathbb F_X\times  \mathbb F_X$ if and only if $\Gamma$ has solvable Word Problem on the set $\tau(T)\subset \mathbb F_X$.
\end{lemma}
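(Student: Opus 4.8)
The plan rests on a single algebraic identity: for any $(\omega_1,\omega_2)\in\mathbb F_X\times\mathbb F_X$ one has $\pi(\omega_1)=\pi(\omega_2)$ if and only if $\pi(\tau(\omega_1,\omega_2))=\pi(\omega_1^{-1}\omega_2)=e$. In other words, the pair $(\omega_1,\omega_2)$ consists of equal elements exactly when the word $\tau(\omega_1,\omega_2)$ is trivial, so that deciding equality for a pair and deciding triviality for its $\tau$-image are literally the same Boolean question. The whole content of the lemma is thus to transport algorithms across the map $\tau$, which is effective (free reduction is computable) but far from injective: each word has infinitely many $\tau$-preimages.

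The direction ``WP on $\tau(T)$ implies EP on $T$'' I would settle by direct composition. Given a pair $(\omega_1,\omega_2)\in T$, first compute the reduced word $\omega:=\tau(\omega_1,\omega_2)\in\tau(T)$, then run the given partial WP-algorithm on $\omega$. Since $\omega\in\tau(T)$, this run halts by hypothesis, and I report ``$\pi(\omega_1)=\pi(\omega_2)$'' precisely when it reports ``$\omega$ trivial''. By the identity above this is correct, and the procedure clearly halts on all of $T$.

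For the converse, ``EP on $T$ implies WP on $\tau(T)$'', the difficulty is that from a word $\omega\in\tau(T)$ I must locate a pair of $T$ mapping to it, while membership in $T$ is not assumed decidable and $\tau$ has infinitely many preimages. The resolution is to parametrise \emph{all} preimages of $\omega$ as $p_\psi:=(\psi,\,\text{reduced form of }\psi\omega)$ for $\psi\in\mathbb F_X$, each satisfying $\tau(p_\psi)=\omega$. Since $\omega\in\tau(T)$, some witnessing pair $(\omega_1,\omega_2)\in T$ has $\omega_1^{-1}\omega_2=\omega$, and then $p_{\omega_1}=(\omega_1,\omega_2)\in T$; thus at least one $p_\psi$ lies in $T$. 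I would then dovetail the given EP-algorithm on the enumerated inputs $p_{\psi_0},p_{\psi_1},\dots$ and, as soon as any parallel run halts, output ``$\omega$ trivial'' or ``$\omega$ non-trivial'' according to whether that run reported equality or inequality.

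The point requiring care, and the main obstacle, is correctness of this dovetailing: a run on some $p_\psi$ lying \emph{outside} $T$ might halt first, and I must argue it still returns the right bit. This is guaranteed by the definition of solvability of EP on $T$, which demands that the partial algorithm, whenever it halts, decides equality correctly, not merely on inputs drawn from $T$. Hence every halting run, on whatever preimage, answers the same question ``is $\pi(\omega)=e$?'' with the same correct value, so the first run to halt yields the correct triviality verdict; and termination is ensured because the witnessing preimage lies in $T$, where the EP-algorithm is guaranteed to stop. Beyond this, the only remaining work is the routine bookkeeping of the dovetailing and checking that the family $\{p_\psi\}_{\psi\in\mathbb F_X}$ genuinely enumerates a set containing a member of $T$.
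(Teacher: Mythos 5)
Your proposal is correct and follows essentially the same route as the paper: the paper's algorithm for the direction ``EP on $T$ implies WP on $\tau(T)$'' likewise dovetails the EP-algorithm over the pairs $(v,v\omega)$ for $v$ ranging over a computable enumeration of $\mathbb F_X$, which is exactly your family $\{p_\psi\}$, and relies on the same observation that a partial EP-algorithm answers correctly whenever it halts. The converse direction is also handled identically, via $T\subset\tau^{-1}(\tau(T))$.
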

\begin{proof}
Let us denote with $\mathcal A$ the algorithm solving the Equality Problem on $T$, we are going to describe an algorithm solving the Word Problem on the set $\tau(T)$. 
For every $\omega, v \in\mathbb F_X$, the word $\omega$ is trivial if and only if   $\pi(v)=\pi(v\omega)$.
 Let us denote by $\{v_n\}_{n\in \mathbb N}$
  a computable enumeration of $\mathbb F_X$.
The algorithm takes $\omega$ as an input and runs the algorithm  $\mathcal A$ simultaneously on the pairs
 $(v_n , v_n \omega)$: if there exists 
 $n\in \mathbb N$ such that 
 $( v_n, v_n \omega)\in T$, then the algorithm stops establishing if
  $\omega$ is in $\ker \pi$ or not. We conclude observing that 
$\{\omega: \exists\, v\in \mathbb F_X: (v , v \omega)\in T\}=\tau(T)$.

For the other direction, it is easy to see that solvability of WP on a set $S\subset \mathbb F _X$  implies solvability of EP on all pairs of words having image through $\tau$ in $S$ itself, namely $\tau^{-1}(S)$. But if $S=\tau(T)$ for some $T\subset \mathbb F^2_X$ then $T\subset \tau^{-1}(\tau(T))$.
 \end{proof}
For $T\subset \mathbb F _X^2$, if we denote with $\overline T:= \tau^{-1}(\tau(T))$,  solvability of EP on $T$ implies solvability of EP on $\overline T$. For $S\subset \mathbb F_X$, if we also denote, by slight abuse of notation, $\overline S:=\{\alpha^{-1} \omega \alpha:\, \omega\in S, \alpha\in \mathbb F_X\}=S^{\mathbb F_X}$, solvability of WP on $S$ implies solvability of WP on $\overline S$. These facts can be also directly proved noticing that $${\overline T\subset\{(\alpha \omega_1 \beta,\alpha \omega_2 \beta):\, (\omega_1,\omega_2)\in T, \alpha, \beta\in \mathbb F_X\}}.$$
Even if  solvability of the EP on $T$ is equivalent to that on $\overline T$, the complexity classes of these two problems are, a priori, different.

\begin{remark}\label{rem-epwp-comp}
Let $\Gamma$ be a group generated by a finite set $X$. 
If the WP on $\tau(T)\subset \mathbb F_X$ is solvable in some complexity class $C$ then the EP on $T\subset \mathbb F_X^2$ is solvable in the complexity class $C$.
It follows in fact from the proof of the previous lemma that the algorithm solving the EP on $T$ is induced by the algorithm solving the WP on $\tau(T)$ without altering  the complexity.
If the EP on  $T\subset \mathbb F_X^2$ is solvable in the complexity class $C$ and $T=\overline T$, then  WP on $\tau(T)\subset \mathbb F_X$ is solvable in the complexity class $C$. Indeed, if $T=\overline T$, then $(e,\omega)\in T$ for every $\omega\in \tau(T)$ so the steps performed to establish if $\omega$ is trivial are exactly those of $\mathcal A$ on $(e,\omega)$.
\end{remark}

\begin{theorem}\label{cor:epwp}Let $\Gamma$ be  a group generated by a finite set $X$. 
If $\Gamma$ has solvable Equality Problem on a set $S\times S$, where $S\subset \mathbb F_X$ is UB-generic, then $\Gamma$ has solvable Word Problem. 
\end{theorem}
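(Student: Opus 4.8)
The plan is to deduce this from the two ingredients already assembled: the Equality--Word dictionary of Lemma~\ref{lem-epwp} and the ``sumset'' property of UB-generic sets recorded in Remark~\ref{oss}. The whole argument rests on the observation that feeding a square set $S\times S$ through the translation map $\tau$ of~\eqref{eq:p} produces precisely the product set $S^{-1}S$, and that UB-genericity is exactly strong enough to make this product exhaust all of $\mathbb F_X$. So rather than construct a new algorithm from scratch, I would let the machinery of the previous lemma and remark do the work and simply verify that they compose.

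Concretely, first I would apply Lemma~\ref{lem-epwp} with $T=S\times S$: solvability of the EP on $S\times S$ is equivalent to solvability of the WP on $\tau(S\times S)$. Second, I would compute $\tau(S\times S)=\{s_1^{-1}s_2:\ s_1,s_2\in S\}=S^{-1}S$ straight from the definition of $\tau$. Third, I would invoke Remark~\ref{oss} (itself a consequence of the ball-translate characterization in Proposition~\ref{prima}): since $S$ is UB-generic, $S^{-1}S\supset\mathbb F_X$, and hence $S^{-1}S=\mathbb F_X$. Chaining the three facts, the WP is solvable on all of $\mathbb F_X$, which is by definition solvability of the WP for $\Gamma$. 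Note this covers every $d=\lvert X\rvert\geq 1$, since Proposition~\ref{prima} treats the cases $d=1$ and $d>1$ alike.

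I do not expect a genuine obstacle here, because the single nontrivial point---that $S^{-1}S=\mathbb F_X$---is exactly what Remark~\ref{oss} supplies, and the conceptual heart of that fact was already isolated in Proposition~\ref{prima}: a UB-generic set contains translates of arbitrarily large balls, so any prescribed word $\omega\in\mathbb F_X$ can be written as $\omega=v^{-1}w$ with $v,w$ lying in a common translate of a large enough ball, hence with $v,w\in S$. Deciding whether $\omega$ is trivial then reduces to running the given EP algorithm on the pair $(v,w)=(v,v\omega)\in S\times S$, which is precisely the search procedure built into the proof of Lemma~\ref{lem-epwp}. In this sense the theorem is an immediate synthesis of the translation-invariance that motivated the Banach-density framework; the real mathematical content lives upstream, in Proposition~\ref{prima} and Remark~\ref{oss}, and the present statement is their payoff.
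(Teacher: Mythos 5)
Your proof is correct and is essentially identical to the paper's: both reduce the statement to Lemma~\ref{lem-epwp} applied to $T=S\times S$ together with the fact from Remark~\ref{oss} that $\tau(S\times S)=S^{-1}S=\mathbb F_X$ for $S$ UB-generic. The extra detail you supply (tracing the claim back to Proposition~\ref{prima}) is just an unpacking of what the paper cites implicitly.
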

\begin{proof}
By virtue of Remark~\ref{oss}, if $S$ is UB-generic then $\tau(S\times S)=\mathbb F_X$. By Lemma \ref{lem-epwp}, the group $\Gamma$ has solvable Word Problem.
\end{proof}

\begin{proof}[Proof of Theorem~\ref{thA}]
 Suppose that  $\Gamma$ has Fubini-generic solvable EP, that is  the EP is solvable on a set $S\times S$ with $S\subset \mathbb F_X$ generic. It follows from Remark~\ref{oss} that the subset $S$ is also UB-generic. By virtue of  Theorem~\ref{cor:epwp} the group $\Gamma$ has solvable~ WP.\end{proof}

As a consequence, while considering the Equality Problem on ``small'' square subsets $S\times S \subset \mathbb F^2_X$ produces
new concepts such as the  algorithmic finiteness, considering its Fubini-generic solvability is simply equivalent to solvability of the classical WP, from the point of view of pure decidability. However, Fubini-genericity is still worth investigating in connection with complexity and with other decision problems, as we observe in the following remarks.
\begin{remark}
  Note that an analogue of Theorem~\ref{thA} for the Conjugacy Problem cannot exist. Indeed, in~\cite{BMR09,BMR07} it is proved that, under suitable hypotheses on $H$, the Miller groups $G(H)$ have solvable Conjugacy Problem on exponentially Fubini-generic (and in fact even bigger) sets. On the other hand, they have unsolvable Conjugacy Problem. \end{remark}
  
\begin{remark}\label{rem-epwp-comp2}
Even if  solvability of the WP on $\Gamma$  is equivalent to
solvability of the EP on $S\times S$  for any UB-generic set $S\subset
\mathbb F_X$, their complexity classes are, a priori, different and possibly depend on the shape of $S$. Indeed, Proposition~\ref{prima}  only ensures that for an UB-generic set $S$ the function $U_S(n):=\min\{|\omega|: \omega B_n \subset S\}$ is finite-valued but the natural upper bound for the complexity of our algorithm (described in the proof of Lemma~\ref{lem-epwp}) depends on the growth rate of $U_S(n)$.
\end{remark}

One might still want to investigate the partial Equality Problem. A possible way to do it is to employ  the natural definition of genericity in products
(as already defined in~\cite{generic02}). As we show in the next
section, considering the  generic EP with this notion of genericity does not lead to new behavior either.


\section{Generic Word Problem and generic Equality Problem}
\begin{definition}\label{defEP}
We say that a group $\Gamma$ generated by a finite set $X$ has \emph{ solvable generic EP} with respect to $X$, if it has solvable EP on a generic set of $\mathbb F_X^2$ (in the sense of Equation \eqref{def:gen}).
\end{definition}

\begin{theorem}\label{thm:wpep}
  Let  $\Gamma$ be a group generated by a finite set $X$. Then $\Gamma$ has solvable generic Word Problem with respect to $X$ if and only if it has solvable generic Equality Problem with respect to $X$. Moreover, the Word Problem on $\Gamma$ is solvable only on negligible subsets of $\mathbb F_X$ if and only if  the Equality Problem on $\Gamma$ is solvable only on negligible subsets of $\mathbb F_X^2$.
\end{theorem}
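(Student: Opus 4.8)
The plan is to prove both equivalences by exploiting the map $\tau\colon\mathbb F_X^2\to\mathbb F_X$ from Equation~\eqref{eq:p} together with Lemma~\ref{lem-epwp}, reducing the statements about the EP on subsets of $\mathbb F_X^2$ to statements about the WP on their $\tau$-images in $\mathbb F_X$. The two directions of each equivalence are, however, genuinely different in difficulty, so I would treat them separately.

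For the implication ``solvable generic WP $\Rightarrow$ solvable generic EP'', I would start from a generic set $S\subset\mathbb F_X$ on which the WP is solvable and produce a generic set in $\mathbb F_X^2$ on which the EP is solvable. The natural candidate is $\tau^{-1}(S)$, since by Lemma~\ref{lem-epwp} solvability of WP on $S$ yields solvability of EP on $\tau^{-1}(S)$. The crux here is the \emph{density transfer}: I would need to show that if $S\subset\mathbb F_X$ is generic then $\tau^{-1}(S)\subset\mathbb F_X^2$ is generic. This is a counting estimate comparing $\lvert\tau^{-1}(S)\cap B_n(\mathbb F_X^2)\rvert$ to $\lvert B_n(\mathbb F_X^2)\rvert$; using the asymptotics in Equation~\eqref{palla} and the fact that $\tau^{-1}(\omega)=\{(v,v\omega):v\in\mathbb F_X\}$ is a ``diagonal-type'' fiber, one fibers $\mathbb F_X^2$ over $\mathbb F_X$ via $\tau$ and controls how the ball $B_n(\mathbb F_X^2)$ distributes over these fibers. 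I expect this to be the main obstacle, since the fibers of $\tau$ intersect balls of $\mathbb F_X^2$ in a nonuniform way and the preimage of a ball of fixed radius has infinite cardinality, so the argument requires a careful count of pairs $(\omega_1,\omega_2)$ with $\lvert\omega_1\rvert+\lvert\omega_2\rvert\le n$ and $\omega_1^{-1}\omega_2\in S$; a cancellation analysis of the reduced form of $\omega_1^{-1}\omega_2$ is what makes this delicate.

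For the reverse implication ``solvable generic EP $\Rightarrow$ solvable generic WP'', I would begin with a generic $T\subset\mathbb F_X^2$ on which the EP is solvable and pass to $\tau(T)\subset\mathbb F_X$, on which Lemma~\ref{lem-epwp} guarantees the WP is solvable. Here the density transfer runs the other way: I must show that genericity of $T$ forces genericity of $\tau(T)$. This direction should be the easier one, because $\tau$ only collapses and enlarges sets; one can argue that the negligibility of $(\tau(T))^c$ follows from that of $T^c$, possibly by passing through the saturated set $\overline T=\tau^{-1}(\tau(T))\supset T$ (noting $\tau(\overline T)=\tau(T)$) and using that the complement of $\overline T$ is contained in a union of $\tau$-fibers over $(\tau(T))^c$. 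The same density-comparison estimate established in the previous paragraph, read in the appropriate direction, should close this case.

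The ``only negligible'' equivalence is handled by contrapositive along the same lines. Solvability of EP on a non-negligible $T\subset\mathbb F_X^2$ gives solvability of WP on $\tau(T)$, and I would need the companion fact that $\tau$ does not destroy non-negligibility, i.e.\ that a non-negligible $T$ has non-negligible image $\tau(T)$ (equivalently, that $\tau^{-1}$ of a negligible set is negligible, which is exactly the estimate from the first direction); conversely, non-negligible WP on some $S\subset\mathbb F_X$ transfers to non-negligible EP on $\tau^{-1}(S)$ by the density-transfer lemma. Thus the entire theorem reduces to one quantitative lemma relating the (negligible) densities of a set $S\subset\mathbb F_X$ and of its $\tau$-preimage in $\mathbb F_X^2$, and once that lemma is in place both stated equivalences follow formally from Lemma~\ref{lem-epwp}.
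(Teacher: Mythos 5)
Your plan coincides with the paper's own proof: the authors isolate exactly the density-transfer lemma you describe (their Lemma~\ref{lem:fg}, proved via the count of $P(s,n)=\{\omega:|\omega|+|\omega s|\le n\}$ as a neighborhood of the geodesic to $s$ in Lemma~\ref{nuovo}), and then combine it with Lemma~\ref{lem-epwp} in both directions, arguing by contrapositive for the ``only negligible'' statement. Your reduction of the image direction to the preimage direction via $\overline T=\tau^{-1}(\tau(T))\supset T$ is also precisely how the paper deduces parts (3) and (4) of that lemma from parts (1) and (2).
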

An easy consequence of the previous theorem and Remark \ref{rem-epwp-comp} is the following.
\begin{corollary}\label{cor:comp}
Let $\Gamma$ be a group generated by a finite set $X$.
If the generic WP  with respect to $X$   is solvable in some complexity class $C$ then  the generic EP with respect to $X$ is solvable in the complexity class $C$. Vice versa, if the EP on a generic set $T\subset \mathbb F_X^2$, with $T=\overline T$, is solvable in some complexity class $C$, then the generic WP  with respect to $X$   is solvable in the complexity class $C$.
\end{corollary}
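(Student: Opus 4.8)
The plan is to isolate a single density statement about the map $\tau$ of \eqref{eq:p} and to deduce both equivalences from it together with Lemma~\ref{lem-epwp}. The statement I would prove is that, for every $S\subset\mathbb F_X$, the set $S$ is negligible if and only if $\tau^{-1}(S)\subset\mathbb F_X^2$ is negligible. Granting this, all four implications become formal. If the WP is solvable on a generic $S$, then by Lemma~\ref{lem-epwp} the EP is solvable on $\tau^{-1}(S)$, whose complement $\tau^{-1}(S^{c})$ is negligible, so the generic EP is solvable. Conversely, if the EP is solvable on a generic $T$, then the WP is solvable on $\tau(T)$; since $\tau^{-1}(\tau(T)^{c})=(\tau^{-1}(\tau(T)))^{c}\subseteq T^{c}$ is negligible, the density statement forces $\tau(T)^{c}$ to be negligible, so the generic WP is solvable. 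The second equivalence is the contrapositive packaging of the same facts: a non-negligible solvability set $S$ for the WP yields, via $\tau^{-1}(S)$, a non-negligible solvability set for the EP; and a non-negligible solvability set $T$ for the EP yields the solvability set $\tau(T)$ for the WP, which is non-negligible because $T\subseteq\tau^{-1}(\tau(T))$ makes $\tau^{-1}(\tau(T))$, hence $\tau(T)$, non-negligible.

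The entire content thus lies in that density statement, and controlling the fibres of $\tau$ is where I expect the real work. Setting $N_v(n):=\lvert\{\omega_1\in\mathbb F_X:\lvert\omega_1\rvert+\lvert\omega_1 v\rvert\le n\}\rvert$, a rearrangement by the value $v=\omega_1^{-1}\omega_2$ gives $\lvert\tau^{-1}(S)\cap B_n(\mathbb F_X^2)\rvert=\sum_{v\in S\cap B_n}N_v(n)$. I would estimate $N_v(n)$ by splitting each $\omega_1$ according to the maximal prefix $w$ of $v$ cancelling in the product $\omega_1 v$: writing $\omega_1=p'w^{-1}$ one has $\lvert\omega_1\rvert+\lvert\omega_1 v\rvert=2\lvert p'\rvert+\lvert v\rvert$, so the condition becomes $\lvert p'\rvert\le\lfloor(n-\lvert v\rvert)/2\rfloor$. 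As $w$ ranges over the at most $\lvert v\rvert+1$ prefixes of $v$, this yields the upper bound $N_v(n)\le(\lvert v\rvert+1)\,\lvert B_{\lfloor(n-\lvert v\rvert)/2\rfloor}\rvert$, while choosing $\omega_1$ to be the inverses of the prefixes of $v$ gives the lower bound $N_v(n)\ge n+1$ whenever $\lvert v\rvert=n$.

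The \emph{delicate} direction is that negligibility is preserved. Grouping by spheres, inserting the upper bound and \eqref{palla}, and writing $a_\ell:=\lvert S\cap S_\ell\rvert/\lvert S_\ell\rvert$ (which tends to $0$ since $S$ is negligible), I obtain
\[\frac{\lvert\tau^{-1}(S)\cap B_n(\mathbb F_X^2)\rvert}{\lvert B_n(\mathbb F_X^2)\rvert}\le\frac{C_1 c_s}{c_2}\sum_{j=0}^{n}a_{n-j}\,\alpha^{-j/2}.\]
Since $\alpha=2d-1>1$, the series $\sum_j\alpha^{-j/2}$ converges and each $a_{n-j}\to 0$, so by dominated convergence the right-hand side tends to $0$. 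This geometric decay, coming from the product metric on $\mathbb F_X^2$, is the main obstacle: individual fibres $N_v(n)$ can themselves be comparable to a ball, so no termwise bound is available and one must exploit the concentration of mass near $\lvert v\rvert=n$. For the reverse direction I argue by contraposition: if $a_\ell\ge c>0$ along a subsequence, then at $n=\ell$ the contribution of the terms with $v\in S_n$ already gives $\lvert\tau^{-1}(S)\cap B_n(\mathbb F_X^2)\rvert\ge(n+1)\lvert S\cap S_n\rvert$, which by \eqref{palla} keeps the density bounded below by a positive constant, so $\tau^{-1}(S)$ is non-negligible. Finally, the case $d=1$ is immediate: there $\Gamma$ is cyclic, so the WP is solvable on all of $\mathbb F_X$ and the EP on all of $\mathbb F_X^2$; both generic problems are solvable and neither is solvable only on negligible sets, so both equivalences hold trivially. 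Assembling the four implications above then yields both assertions of the theorem.
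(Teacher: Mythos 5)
Your density analysis is correct and is essentially the paper's own Lemma~\ref{lem:fg}(1) (fibre decomposition of $\tau$ via the sets $P(s,n)$, the $(\lvert v\rvert+1)\lvert B_{(n-\lvert v\rvert)/2}\rvert$ upper bound, the geometric summation, and the $\lvert P(s,n)\rvert=n+1$ lower bound for $\lvert s\rvert=n$). But what you have proved is Theorem~\ref{thm:wpep} -- the \emph{decidability} transfer between generic WP and generic EP -- not Corollary~\ref{cor:comp}. The corollary is a statement about \emph{complexity classes}: it asserts that solvability in a class $C$ is preserved in both directions. Your proposal never tracks the running time of the reduction algorithms, and this is precisely where the content of the corollary lies (the paper derives it as ``an easy consequence'' of Theorem~\ref{thm:wpep} \emph{together with} Remark~\ref{rem-epwp-comp}, which is the complexity bookkeeping you omit).

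Concretely, two points are missing. First, in the direction WP$\Rightarrow$EP you must observe that the EP algorithm on $\tau^{-1}(S)$ obtained from a WP algorithm on $S$ merely computes $\omega_1^{-1}\omega_2$ and runs the WP algorithm, so the complexity class is unchanged (using the standing assumption that complexity bounds are closed under addition of linear functions). Second, and more seriously, in the direction EP$\Rightarrow$WP the generic reduction of Lemma~\ref{lem-epwp} dovetails the EP algorithm over all pairs $(v_n, v_n\omega)$ until it finds one lying in $T$; the time this takes is governed by how far one must search, and is \emph{not} bounded within $C$ in general. This is exactly why the corollary carries the hypothesis $T=\overline T$, which your argument never uses: when $T=\overline T$ one has $(e,\omega)\in T$ for every $\omega\in\tau(T)$, so the WP algorithm on $\omega$ reduces to a single run of the EP algorithm on $(e,\omega)$ and the complexity class is preserved. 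Without supplying these two observations (and invoking the unused hypothesis), your proof establishes only the decidability statement, not the one asked for.
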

To prove Theorem \ref{thm:wpep} we will need  two lemmas to compare densities of subsets of $\mathbb F _X $ and of $\mathbb F_X\times\mathbb F_X$. Recall that $|X|=d$ and $\alpha=2d-1$, and recall the definition of the map $\tau:\mathbb F_X\times \mathbb F_X\to \mathbb F_X$ in Equation~\eqref{eq:p}.
For $s\in \mathbb F_X$ and $n\in \mathbb N$, we define the subset $$P(s,n):=\{\omega\in \mathbb F_X: |\omega|+|\omega s|\leq n\},$$
which we will need to estimate the density of preimages under $\tau$.
\begin{lemma}\label{nuovo}
For every reduced word $s=s_k s_{k-1} \cdots s_1\in \mathbb F_X$, $s_i\in X\cup X^{-1}$, with $|X|>1$, we have 
$$ P(s^{-1},n)=       \bigcup^k_{i=0} B_{\frac{n-k}{2}} s_i\cdots s_1.$$
\end{lemma}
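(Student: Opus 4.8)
The plan is to analyze the quantity $|\omega|+|\omega s^{-1}|$ through the cancellation that occurs when $\omega$ is multiplied on the right by $s^{-1}$. For $0\le j\le k$ I would introduce the suffixes $t_j:=s_j s_{j-1}\cdots s_1$ (so that $t_0=e$ and $t_k=s$) and the complementary prefixes $u_j:=s_k s_{k-1}\cdots s_{j+1}$ (so that $u_k=e$ and $u_0=s$). Since $s$ is reduced, each $t_j$ and each $u_j$ is reduced, with $|t_j|=j$, $|u_j|=k-j$, and $s=u_j t_j$. The identity to be used repeatedly is
$$t_j s^{-1}=t_j(u_j t_j)^{-1}=u_j^{-1},$$
obtained by free cancellation of $t_j$. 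Geometrically, the $t_j$ range over the vertices of the geodesic from $e$ to $s$ in the tree $\mathbb F_X$, and $B_{\frac{n-k}{2}}t_j$ is the set of $\omega$ whose $j$-th vertex on the geodesic $[\omega,\omega s^{-1}]$ lies within $\frac{n-k}{2}$ of $e$; but I would argue purely combinatorially.

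For the inclusion $\supseteq$, I would fix $j$ and take $\omega=\eta t_j$ with $\eta\in B_{\frac{n-k}{2}}$, that is $|\eta|\le\frac{n-k}{2}$. Then $|\omega|\le|\eta|+|t_j|=|\eta|+j$, while by the identity above $\omega s^{-1}=\eta\, t_j s^{-1}=\eta u_j^{-1}$, whence $|\omega s^{-1}|\le|\eta|+|u_j|=|\eta|+(k-j)$. Adding these gives $|\omega|+|\omega s^{-1}|\le 2|\eta|+k\le n$, so $\omega\in P(s^{-1},n)$, and therefore $\bigcup_{j=0}^k B_{\frac{n-k}{2}}t_j\subseteq P(s^{-1},n)$. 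Note that here only the triangle inequality is needed, since the two factorizations may a priori involve cancellation.

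For the reverse inclusion I would take a reduced word $\omega\in P(s^{-1},n)$ and let $p$ be the largest index in $\{0,\dots,k\}$ for which $t_p$ is a suffix of $\omega$ (the index $p=0$ always qualifies, and the admissible indices form an initial segment since $t_j$ is a suffix of $t_{j+1}$). Writing $\omega=\omega' t_p$ in reduced form gives $|\omega|=|\omega'|+p$. The crucial point is that $\omega s^{-1}=\omega' u_p^{-1}$ is already reduced: if $p<k$, then the last letter of $\omega'$ cannot be $s_{p+1}$, for otherwise $t_{p+1}=s_{p+1}t_p$ would be a suffix of $\omega$, contradicting the maximality of $p$; hence no cancellation occurs between $\omega'$ and the reduced word $u_p^{-1}=s_{p+1}^{-1}\cdots s_k^{-1}$, and $|\omega s^{-1}|=|\omega'|+(k-p)$ exactly (the case $p=k$ is immediate, as $u_k=e$). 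Thus $n\ge|\omega|+|\omega s^{-1}|=2|\omega'|+k$, giving $|\omega'|\le\frac{n-k}{2}$, i.e.\ $\omega=\omega' t_p\in B_{\frac{n-k}{2}}t_p$. Combining the two inclusions proves the equality. I expect the only delicate step to be precisely this verification that the maximal-suffix choice forces $\omega' u_p^{-1}$ to be reduced, since this is what upgrades the mere inequalities of the first inclusion into the exact length identities needed here; the degenerate cases $p=k$ and $n<k$ (where both sides are empty) are routine.
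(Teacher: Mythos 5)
Your proof is correct and follows essentially the same route as the paper's: both directions rest on decomposing $\omega$ as (reduced prefix)$\cdot(s_i\cdots s_1)$, where your ``maximal suffix index $p$'' is exactly the paper's ``number of cancellations $i$'' in the product $\omega s^{-1}$, and on the resulting exact length identity $|\omega|+|\omega s^{-1}|=2|\omega'|+k$ for the forward inclusion versus the triangle inequality for the reverse one. The only difference is presentational: you spell out more explicitly why maximality of $p$ forces $\omega' u_p^{-1}$ to be reduced, which the paper compresses into the phrase ``$v$ not ending in $s_i^{-1}$ or $s_{i+1}$''.
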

\begin{proof}
  Informally,  we want to show that in the left Cayley graph of $\mathbb F_X$ the set $P(s^{-1},n)$  is the ``neighborhood of radius $\frac{n-k}{2}$''  of the geodesic from the empty word to $s$.

  Let   $\omega\in P(s^{-1},n)$. Suppose that the product of $\omega$ with $s^{-1}=s_1^{-1}\cdots s_k^{-1}$ induces exactly
 $i$ cancellations: this implies that $\omega=v s_i \cdots s_1$  for some $v$ not ending in $s_i^{-1}$ or  $s_{i+1}$ and $|\omega s^{-1}|=|v
 s_{i+1}^{-1}\cdots s_k^{-1}|=|v|+k-i$. Since $\omega\in P(s^{-1},n)$ we have
 $$n\geq|\omega|+|\omega s^{-1}|=|v|+i+|v|+k-i,$$
 that is $|v|\leq \frac{n-k}{2}$ and therefore  $ P(s^{-1},n) \subset           \bigcup^k_{i=0} B_{\frac{n-k}{2}} s_i\cdots s_1$.
  
  To prove the other inclusion, let now $\omega \in  \bigcup^k_{i=0} B_{\frac{n-k}{2}} s_i\cdots s_1 $. This means that there exist  $i$ with $0\leq i\leq k$ and $v\in B_{ \frac{n-k}{2}}$ such that  $\omega=v s_i\cdots s_1$. Then 
$$|\omega|+|\omega s^{-1}|=|v s_i\cdots s_1|+|v s_{i+1}^{-1}\cdots s_k^{-1}|  \leq  2 \left( \frac{n-k}{2}\right)+i+k-i\leq n,$$
  that is $\omega\in P(s^{-1},n)$.
\end{proof}

\begin{lemma}\label{lem:fg}
  For any $S\subset \mathbb F_X$, for any $T \subset \mathbb F^2_X$, the following hold:
  \begin{enumerate}
    \item
      $S$ is (exponentially) negligible if and only if $\tau^{-1}(S)$ is (exponentially) negligible;
    \item
  $S$ is (exponentially) generic if and only if $\tau^{-1}(S)$ is  (exponentially) generic;
       \item 
   if  $T$ is (exponentially) generic then $\tau(T)$ is (exponentially) generic;
  \item 
 if  $\tau(T)$ is (exponentially) negligible then  $ T$ is (exponentially) negligible.

  \end{enumerate}
\end{lemma}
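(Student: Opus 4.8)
The plan is to reduce all four statements to part (1) by purely set-theoretic manipulations, and then to prove (1) by a direct volume computation on the fibers of $\tau$. Since $\tau$ is surjective, $\tau^{-1}(S^c)=\tau^{-1}(S)^c$, so (2) is exactly the complement of (1). For (4) note that $T\subseteq \tau^{-1}(\tau(T))$, so if $\tau(T)$ is (exponentially) negligible then by the forward implication of (1) the set $\tau^{-1}(\tau(T))$ is (exponentially) negligible, and hence so is its subset $T$. For (3) note that $\tau^{-1}(\tau(T)^c)\subseteq T^c$: indeed, if $\tau(\omega_1,\omega_2)\notin \tau(T)$ then $(\omega_1,\omega_2)\notin T$. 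Thus if $T$ is (exponentially) generic, then $\tau^{-1}(\tau(T)^c)$ is (exponentially) negligible as a subset of the (exponentially) negligible set $T^c$, and the reverse implication of (1), applied to $S=\tau(T)^c$, yields that $\tau(T)^c$ is (exponentially) negligible, i.e.\ $\tau(T)$ is (exponentially) generic.

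It remains to prove (1). The key point is that the fibers of $\tau$ partition $\tau^{-1}(S)\cap B_n(\mathbb F_X^2)$: writing a pair as $(\omega,\omega s)$ with $s=\tau(\omega,\omega s)$, the condition $|\omega|+|\omega s|\le n$ is exactly $\omega\in P(s,n)$, and $\omega\mapsto(\omega,\omega s)$ is a bijection onto the fiber over $s$. Since distinct $s$ yield disjoint fibers,
$$|\tau^{-1}(S)\cap B_n(\mathbb F_X^2)|=\sum_{s\in S}|P(s,n)|=\sum_{s\in S,\ |s|\le n}|P(s,n)|.$$
Applied to $s^{-1}$, Lemma~\ref{nuovo} expresses $P(s,n)$ as a union of $|s|+1$ translates of a ball of radius $\lfloor (n-|s|)/2\rfloor$, which by~\eqref{palla} gives the upper bound $|P(s,n)|\le (|s|+1)\,C_1\,\alpha^{(n-|s|)/2}$; keeping only the central ($i=0$) translate, and noting that for $|s|=n$ the radius is $0$, it also gives $|P(s,n)|=|s|+1$ whenever $|s|=n$.

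For the forward implication I set $r_j:=|S\cap S_j|/|S_j|$, which tends to $0$ (resp.\ satisfies $\beta^j r_j\to0$) precisely when $S$ is (exponentially) negligible, by the Cesaro--Stolz characterization recalled after~\eqref{palla}. Using the upper bound together with $|B_n(\mathbb F_X^2)|\ge c_2(n+1)\alpha^n$ and $|S\cap S_j|=r_j c_s\alpha^j$, the ratio $|\tau^{-1}(S)\cap B_n(\mathbb F_X^2)|/|B_n(\mathbb F_X^2)|$ is bounded, after the substitution $m=n-j$, by a constant multiple of $\sum_{m=0}^{n} r_{n-m}\,\alpha^{-m/2}$. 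I control this by a head--tail split: the tail $\sum_{m>M}\alpha^{-m/2}$ is uniformly small for large $M$ since $r_j\le1$, while for fixed $M$ the head tends to $0$ as $n\to\infty$ because each $r_{n-m}\to0$. The reverse implication is cleaner: keeping only the terms with $|s|=n$ gives $|\tau^{-1}(S)\cap B_n(\mathbb F_X^2)|\ge (n+1)|S\cap S_n|$, whence $|\tau^{-1}(S)\cap B_n(\mathbb F_X^2)|/|B_n(\mathbb F_X^2)|\ge (c_s/C_2)\,r_n$, so that if the left-hand side tends to $0$ then $r_n\to0$, i.e.\ $S$ is negligible.

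The exponential versions follow from the same two estimates by carrying a factor $\gamma^n$ through. For the reverse implication the factor $n+1$ cancels, so $\beta^n r_n\to0$ is immediate. For the forward implication one writes $r_{n-m}\le K\beta^{-(n-m)}$, shrinks $\beta$ if necessary so that $\beta<\alpha^{1/2}$, and chooses $1<\gamma<\beta$; then $\gamma^n\sum_m r_{n-m}\alpha^{-m/2}\le K(\gamma/\beta)^n\sum_m(\beta/\alpha^{1/2})^m\to0$. I expect the forward implication of (1) to be the main obstacle: the weights $|P(s,n)|$ decay geometrically in $|s|$ and must be weighed against the extra factor $n+1$ in $|B_n(\mathbb F_X^2)|$, so negligibility of $S$ has to be exploited uniformly over a growing window of sphere radii through the head--tail estimate, rather than on any single sphere; the geometric bookkeeping in the exponential case is the other point requiring care.
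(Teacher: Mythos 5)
Your proof is correct and follows essentially the same route as the paper's: the same fiber decomposition $|\tau^{-1}(S)\cap B_n(\mathbb F_X^2)|=\sum_{s\in S}|P(s,n)|$, the same upper bound on $|P(s,n)|$ from Lemma~\ref{nuovo}, the same head--tail split of the resulting sum (yours organized in the variable $m=n-k$, the paper's in $k$, which is an immaterial difference), the same lower bound via the fibers over $S\cap S_n$, and the same set-theoretic reductions of (2)--(4) to (1). The only point you omit is the case $|X|=1$, where $\alpha=1$ and both Lemma~\ref{nuovo} and your geometric-series estimates do not apply; the paper disposes of this case with a separate one-line remark about elementary computations in the lattice $\mathbb Z^2$.
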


\begin{proof}
When $d=1$ all claims follow from  simple computations in the lattice~$\mathbb Z^2$. 

Suppose $d>1$. Observe that 
\begin{align}\label{neglitubi}
|\tau^{-1}(S)\cap B_n(\mathbb F^2_X)|&=  \sum_{s\in S} |\{(\omega_1,\omega_2): |\omega_1|+|\omega_2|\leq n,\, \omega_1^{-1}\omega_2=s  \} |\\&= \sum_{s\in S}  |P(s,n)| \nonumber.
\end{align}
Notice that, if $|s|>n$, the set $P(s,n)$ is empty. 
We are going to prove that if $S$ is (exponentially) negligible, then $\tau^{-1}(S)$ is (exponentially) negligible.\\
By virtue of  Equation \eqref{palla} and Lemma \ref{nuovo}, we have, for any  $s\in S_k$,  $|P(s,n)|\leq (k+1) |B_{(n-k)/2}|\leq C_1  (k+1) \alpha^{(n-k)/2}$. As a consequence
\begin{align*}
\frac{|\tau^{-1}(S)\cap B_n(\mathbb F_X^2)|}{|B_n(\mathbb F_X^2)|}&\leq   \frac{\sum_{s\in S}  |P(s,n)|}{c_2(n+1)\alpha^n}
\leq \frac{C_1}{c_2} \sum_{k=0}^{n} \frac{|S\cap S_{k}|}{\alpha^{\frac{n+k}{2}}}.
\end{align*}
Assume that $\beta\geq 1$ is such that $\beta^n\frac{|S\cap
  B_n|}{|B_n|}\to 0$. Note that this is equivalent to  $S$  being
negligible when  $\beta=1$, and $S$ being  exponentially negligible when $\beta>1$.
Without loss of generality we can also assume $\beta< \sqrt{\alpha}$. 
By Equation \eqref{palla}, we have that  $\beta^n\frac{|S\cap S_n|}{\alpha^n}\leq \beta^n\frac{|S\cap B_n|}{\alpha^n}\to 0$. 
\\In particular, for every $\varepsilon>0$, there exists $\bar{k}$ such that $\beta^k\frac{|S\cap S_k|}{\alpha^k}<\varepsilon$ for every $k> \bar{k}$. Moreover, for every fixed $\bar{k}$ there exists $\bar{n}\in \mathbb N$ such that $\bar{n}>\bar{k}$ and
$\left(\frac{\beta}{\sqrt{\alpha}}\right)^{\bar{n}}\sum_{k=0}^{\bar{k}} \frac{|S\cap S_{k}|}{\alpha^{k/2}}<\varepsilon$.
For any $n>\bar{n}$ we have
\begin{align*}
\beta^n \frac{|\tau^{-1}(S)\cap B_n(\mathbb F_X^2)|}{|B_n(\mathbb F_X^2)|}
 &\leq\frac{C_1}{c_2} \left( \frac{\beta^n }{\alpha^{n/2}}\sum_{k=0}^{\bar{k}} \frac{|S\cap S_{k}|}{\alpha^{k/2}} + \sum_{k=\bar{k}+1}^{n} \beta^k\frac{|S\cap S_k|}{\alpha^k}  \left(\frac{\beta}{\sqrt{\alpha}}\right)^{n-k}\right)\\
&\leq \frac{C_1}{c_2} \left( \left(\frac{\beta}{\sqrt{\alpha}}\right)^{\bar{n}}\sum_{k=0}^{\bar{k}} \frac{|S\cap S_{k}|}{\alpha^{k/2}} +\varepsilon \sum_{k=\bar{k}+1}^{n}\left(\frac{\beta}{\sqrt{\alpha}}\right)^{n-k}\right)\\
&\leq \frac{C_1}{c_2}\left(\varepsilon+\varepsilon \sum^{\infty}_{t=0} \left(\frac{\beta}{\sqrt{\alpha}}\right)^t \right)\leq  \varepsilon  \frac{C_1}{c_2} \left(1+\frac{\sqrt{\alpha}}{\sqrt{\alpha}-\beta}\right).
\end{align*}
Therefore  $\tau^{-1}(S)$ is negligible, and if $\beta>1$ even exponentially negligible.

Now suppose that $S$ is not (exponentially) negligible. By Lemma \ref{nuovo} we have that $|P(s,n)|=n+1$ for $s\in S_n$.
Combining with Equation \eqref{neglitubi} and Equation \eqref{palla} we have
\begin{equation*}
 \frac{|\tau^{-1}(S)\cap B_n(\mathbb F_X^2)|}{|B_n(\mathbb F_X^2)|}\geq \frac{\sum_{s\in S_n}  |P(s,n)|}{C_2(n+1) \alpha^n }\geq  \frac{1}{C_2  } \frac{|S\cap S_n|}{ \alpha^n} 
\end{equation*}
proving that $\tau^{-1}(S)$ is not (exponentially) negligible. This completes the proof of~(1).

The second claim easily follows from (1) by taking complements and observing that $\tau^{-1}(S^c)=\left(\tau^{-1}(S)\right)^c$.
Finally, (3) and (4) easily follow by taking $S=\tau(T)$ and observing that $T\subset \overline T=\tau^{-1}(\tau(T))$.
\end{proof}

\begin{proof}[Proof of Theorem~\ref{thm:wpep}]
  Suppose $\Gamma $ has solvable WP on $S\subset \mathbb F_X$ generic. This means, by virtue of Lemma~\ref{lem-epwp}, that $\Gamma$ has solvable EP on $\tau^{-1}(S)$, which by Lemma~\ref{lem:fg} is generic.
For the converse, suppose $\Gamma $ has solvable EP on a generic set  $T\subset \mathbb F^2_X$. By virtue of Lemma~\ref{lem-epwp}, the group $\Gamma$ has solvable WP on $\tau(T)$, which by Lemma~\ref{lem:fg} is generic. 

Suppose that solvability of the WP on $S\subset \mathbb F_X$ implies that $S$ is negligible. Assume that $\Gamma$ has solvable EP on $T$ with $T$  non-negligible. Then, by  Lemma~\ref{lem:fg} the subset $\tau(T)$ is non-negligible and by Lemma~\ref{lem-epwp} the group $\Gamma$ has solvable WP on $\tau(T)$, that is a contradiction. An analogous argument proves that if  EP is solvable only on negligible sets, then the same is true for the WP.
\end{proof}

\begin{remark}   
Clearly, Theorem~\ref{thm:wpep}  holds true replacing generic (resp.\ negligible) with exponentially generic (resp.\ exponentially negligible). 

In  \cite{gilman/10} (and elsewhere),  the length in $\mathbb F_X\times \mathbb F_X$ used to define genericity is $\ell(\omega_1,\omega_2):=\max\{|\omega_1|,|\omega_2|\}$.
The ball of radius $n$ in $\mathbb F_X^2$ with respect to this length is $B_n\times B_n$. An analogue of Lemma \ref{lem:fg} (and therefore of Theorem \ref{thm:wpep}) can be proved in this setting by using similar arguments. The role of the set $P(s,n)$ is played, in that case, by the set $B_n\cap B_n s$. One can indeed show, similarly to Lemma \ref{nuovo}, that
$B_n\cap B_n s$ is the ball of radius $n-\frac{|s|}{2}$ centered in the middle point of the geodesic from the empty word to $s$.
Moreover, this analogue of Lemma \ref{lem:fg}  gives a generalization of \cite[Lemma 3.2]{gilman/10}  which holds in non-exponential setting.
\end{remark}

\vspace{3mm}


\section{Upper Banach generic Word Problem and algorithmic finiteness}

A group $\Gamma$  generated by a finite set $X$ is algorithmically finite if  there does not exist a computable enumeration of an infinite set of words in $\mathbb F_X$  projecting onto pairwise distinct elements of $\Gamma$, or, equivalently if (cf.\ \cite{Myas/11}):
\begin{itemize}
\item
solvability of EP on $S\times S$ implies that $\pi(S)$ is finite;
\item
for any infinite computably enumerable set $S\subset \mathbb F_X$ we have that $S^{-1}S\cap \ker \pi\neq e$;
\item
solvability of WP on $S^{-1}S$ implies that $\pi(S)$ is finite (Lemma \ref{lem-epwp}).
\end{itemize}
Note that  algorithmic finiteness does not depend on the choice of the
finite generating set, cf.~\cite[Lemma~2.4]{Myas/11}.

We now  characterize, in a similar fashion, solvability of the UB-generic WP (see Definition~\ref{def:UB}), and therefore groups without this property.
\begin{theo}\label{principale}
  Let $\Gamma$ be an infinite group generated by a finite set $X$. The following are equivalent:
\begin{enumerate}
\item
$\Gamma$ has  solvable UB-generic WP with respect to $X$;
\item
there exists $S\subset \mathbb F_X$  computably enumerable, UB-generic and such that $S\cap \ker\pi=\emptyset$;
\item
there exists a computably enumerable sequence $\{\omega_n \}_{n\in \mathbb N}\subset \mathbb F_X$ such that $|\omega_n|_\Gamma>n$ for all $n\in \mathbb N$;
\item
there exists a computably enumerable sequence $\{\omega_n \}_{n\in \mathbb N}\subset \mathbb F_X$ such that $|\omega_{n+1}|_\Gamma>|\omega_{n}|_\Gamma$ for all~$n\in \mathbb N$.
\end{enumerate}
\end{theo}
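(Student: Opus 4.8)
The plan is to establish the cyclic chain of implications $(1)\Rightarrow(2)\Rightarrow(3)\Rightarrow(4)\Rightarrow(1)$. Throughout I will lean on two geometric facts already available: Proposition~\ref{prima}, which says a set is UB-generic exactly when it contains a translate $\omega_n B_n$ of every ball, and the consequence of Theorem~\ref{osse} that $S$ is UB-generic if and only if $S\setminus\ker\pi$ is (since $\ker\pi$ is LB-negligible and the intersection of a UB-generic and an LB-generic set is UB-generic). I will also repeatedly use the elementary dictionary between geometry and length: if $\omega B_n\cap\ker\pi=\emptyset$ then $\pi(\omega)\neq\pi(u^{-1})$ for every $u\in B_n$, and as $u\mapsto\pi(u^{-1})$ exhausts $B_n(\Gamma)$ this means $\pi(\omega)\notin B_n(\Gamma)$, i.e.\ $|\omega|_\Gamma>n$; conversely $|\omega|_\Gamma>n$ forces $\omega B_n\cap\ker\pi=\emptyset$.

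For $(1)\Rightarrow(2)$ I would run the partial algorithm $\mathcal A$ witnessing solvability of WP on a UB-generic set $S$ by dovetailing over all words, and let $S^+$ be the computably enumerable set of words on which $\mathcal A$ halts and answers ``non-trivial''. Correctness of $\mathcal A$ gives $S^+\cap\ker\pi=\emptyset$, while every non-trivial word of $S$ lands in $S^+$, so $S^+\supseteq S\setminus\ker\pi$; the latter is UB-generic by Theorem~\ref{osse}, hence so is $S^+$ by monotonicity of $\overline\mu$. For $(2)\Rightarrow(3)$, given a computably enumerable UB-generic $S$ disjoint from $\ker\pi$, Proposition~\ref{prima} guarantees that for each $n$ some $\omega_n$ satisfies $\omega_n B_n\subset S$. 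The point is that such an $\omega_n$ can be found computably and uniformly in $n$: the condition ``$vB_n\subset S$'' is a finite conjunction of membership tests in the c.e.\ set $S$, hence semi-decidable, and a witness is known to exist, so a dovetailed search over candidate words $v$ terminates. The disjointness $\omega_n B_n\cap\ker\pi=\emptyset$ then yields $|\omega_n|_\Gamma>n$ by the dictionary above.

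The step $(3)\Rightarrow(4)$ is where the only genuine trick enters. We are handed a computable sequence with $|\omega_n|_\Gamma>n$, but we cannot in general compute the numbers $|\omega_n|_\Gamma$ themselves. The idea is to drive a recursion using the free length $|\,\cdot\,|$, which we can compute and which bounds the $\Gamma$-length from above. Setting $\psi_1:=\omega_1$ and $\psi_{k+1}:=\omega_{|\psi_k|}$ (the indices stay positive since each $\psi_k$ has positive $\Gamma$-length), we get $|\psi_{k+1}|_\Gamma>|\psi_k|\geq|\psi_k|_\Gamma$, so the $\Gamma$-lengths strictly increase along the computable subsequence $\{\psi_k\}$. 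Finally, for $(4)\Rightarrow(1)$, a strictly increasing sequence of integer lengths satisfies $|\omega_n|_\Gamma\geq n-1$, so after the harmless reindexing $\nu_n:=\omega_{n+2}$ we recover $|\nu_n|_\Gamma>n$; then $S:=\bigcup_n\nu_n B_n$ is computably enumerable, UB-generic by Proposition~\ref{prima}, and disjoint from $\ker\pi$ by the length criterion, so the trivial algorithm ``enumerate $S$ and answer non-trivial upon finding the input'' solves the WP on the UB-generic set $S$.

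I expect the main obstacle to be bookkeeping the interaction between computability and the non-computable invariant $|\,\cdot\,|_\Gamma$: concretely, justifying in $(2)\Rightarrow(3)$ that the search for $\omega_n$ halts (which rests on $B_n$ being finite, so that $\omega_n B_n\subset S$ is semi-decidable from a mere enumeration of $S$), and engineering in $(3)\Rightarrow(4)$ the strictly increasing $\Gamma$-lengths without ever evaluating $|\,\cdot\,|_\Gamma$, by substituting the computable free length as the jump index. The remainder is a careful but routine combination of Proposition~\ref{prima} and Theorem~\ref{osse}, and the single-generator case needs no separate treatment since both of those results are stated for all $d$.
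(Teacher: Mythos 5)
Your proposal is correct and follows essentially the same route as the paper's proof: the same use of Proposition~\ref{prima} and Theorem~\ref{osse}, the same semi-decidable search for a translate $\omega_n B_n$ inside the c.e.\ set $S$, the same dictionary $\omega B_n\cap\ker\pi=\emptyset\iff|\omega|_\Gamma>n$, the same device of indexing the subsequence by the computable free length $|\psi_k|$ for $(3)\Rightarrow(4)$, and the same shift by $2$ for the converse. Your handling of $(1)\Rightarrow(2)$ via the set of inputs on which the algorithm halts answering ``non-trivial'' is in fact slightly more careful than the paper's, which asserts directly that $S'\setminus\ker\pi$ is computably enumerable.
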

\begin{proof}${}$\\
$(1)\iff(2)$\\
Suppose $\Gamma$ has solvable  UB-generic WP, then there exists an algorithm solving the Word Problem  on an UB-generic subset of inputs $S'\subset \mathbb F_X$. 
The subset $S:= S'\setminus \ker\pi$ is computably enumerable and, by virtue of  Theorem \ref{osse}, UB-generic. Vice versa, the computable enumeration of $S$ solves the WP on the UB-generic set $S$. \\
$(2)\iff(3)$\\
Suppose that  $S$ is a computably enumerable UB-generic subset of non-trivial words. By an elementary argument, the subset $\Omega:=\{(n,\omega): \omega B_n\subset S \}\subset \mathbb N\times \mathbb F_X$ is also computably enumerable.
By virtue of Proposition \ref{prima}, the set $\Omega_n:=\{\omega: (n,\omega)\in \Omega \}$ is non-empty for all $n\in\mathbb N$.
Let us denote by $\omega_n$ the  first element of $\Omega_n$ in the computable  enumeration of $\Omega$. Clearly $\{\omega_n \}_{n\in \mathbb N}$ is computably enumerable. Finally,
 $\omega_n B_n\cap \ker\pi \subset S\cap \ker\pi=\emptyset$ implies that $|\omega_n|_\Gamma>n$.
 Conversely,  $S:=\bigcup_{n\in\mathbb N} \omega_n B_n$ is computably enumerable, UB-generic and such that $S\cap \ker\pi=\emptyset$.\\
$(3)\iff(4)$\\
Suppose  that $\{\omega_n \}_{n\in \mathbb N}$ is a computably enumerable sequence of  $\mathbb F_X$  such that $|\omega_n|_\Gamma>n$ for all $n\in \mathbb N$.
We define inductively a subsequence $\{\omega_{k_n}\}_{n\in \mathbb N}$ as follows: $k_1:=1$ and $k_{n+1}:=|\omega_{k_n}|$ for all $n\geq 1$.
The new sequence is still computably enumerable and with the property that, for every $n\in\mathbb N$,
    $$k_{n}<\lvert \omega_{k_n}\rvert_\Gamma \leq k_{n+1},$$
and thus $|\omega_{k_{n+1}}|_\Gamma>|\omega_{k_n}|_\Gamma$ for all $n\in \mathbb N$. For the reverse implication notice that if the sequence of non-negative integers $\{|\omega_n |_\Gamma\}_{n\in \mathbb N}$  is  strictly increasing, then $\{|\omega_{n+2}|_\Gamma \}_{n\in \mathbb N}$  is superlinear. Therefore  $\{\omega_{n+2} \}_{n\in \mathbb N}\subset \mathbb F _X$ is a sequence with the desired property.\qedhere
  \end{proof}

The equivalence of (1) and (3) (and (4)), yields the following.
\begin{corollary}\label{coco}
If $\Gamma$ has solvable  UB-generic WP with respect to a finite generating set $X$ then it has solvable  UB-generic WP with respect to any finite generating set.
\end{corollary}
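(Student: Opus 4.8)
The plan is to leverage the generating-set-dependent characterization (3) of Theorem~\ref{principale}: all it really records is the existence of a computably enumerable family of words whose images in $\Gamma$ have controllably growing length, and since the word metrics attached to different finite generating sets are bi-Lipschitz equivalent, such a family for $X$ should transfer to any other finite generating set $Y$. Write $|\cdot|_X,|\cdot|_Y\colon\Gamma\to\mathbb N$ for the two word lengths, $\pi_X\colon\mathbb F_X\to\Gamma$ and $\pi_Y\colon\mathbb F_Y\to\Gamma$ for the canonical projections, and keep the paper's abuse $|\omega|_X:=|\pi_X(\omega)|_X$ for $\omega\in\mathbb F_X$ (and similarly over $Y$).

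First I would build the comparison data. For each $x\in X$ fix a word $w_x\in\mathbb F_Y$ with $\pi_Y(w_x)=\pi_X(x)$---this is possible because $Y$ generates $\Gamma$---and let $\phi\colon\mathbb F_X\to\mathbb F_Y$ be the homomorphism determined by $x\mapsto w_x$. Then $\phi$ is computable and satisfies $\pi_Y\circ\phi=\pi_X$, so it preserves the represented group element. Next put $M:=\max_{y\in Y}|\pi_Y(y)|_X$, a fixed positive integer; expanding an optimal $Y$-geodesic for $g$ into generators of $X$ gives the standard estimate $|g|_Y\ge |g|_X/M$ for all $g\in\Gamma$.

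With this in hand the construction is immediate. Solvability of the UB-generic WP with respect to $X$ furnishes, via Theorem~\ref{principale}, a computably enumerable sequence $\{\omega_n\}_{n\in\mathbb N}\subset\mathbb F_X$ with $|\omega_n|_X>n$. Set $v_m:=\phi(\omega_{Mm+1})$; this is again computably enumerable, and since $\pi_Y(v_m)=\pi_X(\omega_{Mm+1})$ we get
$$|v_m|_Y=|\pi_X(\omega_{Mm+1})|_Y\ge \frac{|\omega_{Mm+1}|_X}{M}>\frac{Mm+1}{M}>m.$$
Hence $\{v_m\}_{m\in\mathbb N}\subset\mathbb F_Y$ satisfies condition (3) of Theorem~\ref{principale} for the generating set $Y$, and the implication $(3)\Rightarrow(1)$ of that theorem (applied with $Y$ in place of $X$) yields that $\Gamma$ has solvable UB-generic WP with respect to $Y$, as required.

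The step one might expect to be the obstacle is that extracting the explicit bound $|v_m|_Y>m$ seems to require measuring the $Y$-length of $v_m$, which would amount to solving the Word Problem. This is avoided entirely: the re-indexing $n\mapsto Mm+1$ uses only the fixed integer $M$, and the inequality then holds automatically by the bi-Lipschitz comparison, without any algorithm verifying it. The family $\{v_m\}$ is produced purely by enumerating $\{\omega_n\}$ and applying the computable map $\phi$, so no undecidable length computation is ever invoked.
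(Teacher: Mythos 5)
Your proof is correct and follows exactly the route the paper intends: the paper derives Corollary~\ref{coco} by invoking the equivalence of (1) with (3) (and (4)) in Theorem~\ref{principale}, leaving the transfer between generating sets implicit. You supply precisely that missing detail --- the computable substitution homomorphism $\phi$ together with the bi-Lipschitz bound $|g|_Y\ge |g|_X/M$ and the fixed re-indexing $n\mapsto Mm+1$, which correctly avoids any undecidable length computation --- so this is the same argument, just written out in full.
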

This shows that using the Upper Banach density to measure the Word Problem is a natural choice. In order to study  intrinsic properties of $\Gamma$ it is maybe even more natural  than the classical density.  In particular, the relation with the length in $\Gamma$ allows us to state the following corollary.
\begin{corollary}\label{pra}
Let  $\Gamma$  be a infinite, finitely generated group. If $\Gamma$ is algorithmically finite then it has unsolvable  UB-generic Word Problem and therefore unsolvable generic Word Problem with respect to any finite set of generators.
\end{corollary}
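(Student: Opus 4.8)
The plan is to derive this directly from the characterization in Theorem~\ref{principale}, arguing by contradiction. Suppose $\Gamma$ (infinite, algorithmically finite, generated by the finite set $X$) had solvable UB-generic WP with respect to $X$. By the equivalence $(1)\iff(4)$ in Theorem~\ref{principale}, there would then exist a computably enumerable sequence $\{\omega_n\}_{n\in\mathbb{N}}\subset\mathbb{F}_X$ with $|\omega_{n+1}|_\Gamma > |\omega_n|_\Gamma$ for all $n$.

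The key observation is that a strictly increasing sequence of word lengths forces the corresponding group elements to be pairwise distinct: since $\{|\omega_n|_\Gamma\}_{n\in\mathbb{N}}$ is a strictly increasing sequence of non-negative integers, the values $|\omega_n|_\Gamma$ are all distinct, and two elements of $\Gamma$ with distinct word lengths cannot coincide. Hence the $\pi(\omega_n)$ are pairwise distinct, so $\{\omega_n : n\in\mathbb{N}\}$ is an infinite set of words (the $\omega_n$ themselves being distinct as well), and it is computably enumerable by hypothesis. This is precisely a computable enumeration of an infinite set of words projecting onto pairwise distinct elements of $\Gamma$, contradicting the algorithmic finiteness of $\Gamma$. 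Therefore $\Gamma$ has unsolvable UB-generic WP with respect to $X$.

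To pass to an arbitrary finite generating set and to the generic WP I would invoke two facts already recorded in the excerpt. First, by Corollary~\ref{coco} together with the generator-independence of algorithmic finiteness (cf.~\cite[Lemma~2.4]{Myas/11}), the conclusion holds with respect to \emph{every} finite generating set, not just $X$. Second, by Remark~\ref{oss}, genericity implies UB-genericity; consequently solvability of the WP on a generic set would in particular be solvability on a UB-generic set, so unsolvable UB-generic WP immediately yields unsolvable generic WP.

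I expect essentially no real obstacle here, since all the analytic content has already been absorbed into Theorem~\ref{principale} and Remark~\ref{oss}. The only points requiring a moment of care are the bookkeeping around the definition of algorithmic finiteness -- making sure that a computably enumerable sequence of strictly increasing $\Gamma$-lengths genuinely supplies an infinite computably enumerable set of words with pairwise distinct images -- and the correct use of the implication ``generic $\Rightarrow$ UB-generic'' (rather than its false converse) when deducing unsolvability of the generic WP.
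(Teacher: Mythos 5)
Your proposal is correct and follows exactly the paper's route: the paper's proof is the single sentence that the sequence in (4) of Theorem~\ref{principale} contradicts algorithmic finiteness, and you have simply filled in the (correct) details --- strictly increasing $\Gamma$-lengths force pairwise distinct images, the passage to arbitrary generating sets via Corollary~\ref{coco}, and the implication generic $\Rightarrow$ UB-generic from Remark~\ref{oss}. No gaps.
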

\begin{proof}
The existence of the sequence in (4) of Theorem \ref{principale} contradicts the definition of algorithmic finiteness.
\end{proof}  
Let us denote with $\mathcal C_1, \mathcal C_2$ and,$\mathcal C_3$ the classes of infinite finitely generated groups defined by the following properties:
\begin{align*}
&\mathcal C_1:=\{\Gamma: \mbox{ WP on } S\implies |\pi(S)|<\infty\},\\
&\mathcal C_2:=\{\Gamma: \mbox{ WP on } S^{-1}S   \implies |\pi(S)|<\infty\},\\
&\mathcal C_3:=\{\Gamma: \mbox{ WP on } S\implies S \mbox{ is not UB-generic}\}.
\end{align*}
Equivalently,
\begin{align*}
&\mathcal C_1=\{\Gamma: S \mbox{ computably enumerable}, |\pi(S)|=\infty \implies S\cap \ker \pi\neq \emptyset \},\\
&\mathcal C_2=\{\Gamma: S \mbox{ computably enumerable}, |S|=\infty \implies S^{-1}S\cap \ker \pi\neq e \},\\
&\mathcal C_3=\{\Gamma: S \mbox{ computably enumerable, UB-generic }  \implies S\cap \ker \pi\neq \emptyset \}.
\end{align*}
Note that the properties defining the classes $\mathcal C_1,\mathcal C_2$ and $\mathcal C_3,$ hold equivalently for one or for all finite sets of generators, which we omit in the definition.
We let  
 $$\mathcal C_4:=\{\Gamma: \mbox{ WP on  $S \subset \mathbb F_X$}    \implies S \mbox{ is not generic, for all finite generating sets } X\},$$
equivalently, 
$$\mathcal C_4=\{\Gamma: S\subset \mathbb F_X  \mbox{ c.\ e., generic }  \implies S\cap \ker \pi\neq \emptyset \mbox{ for all finite generating sets } X\}.$$
Finally let us denote with $\mathcal C_5$ the class of infinite finitely generated groups with unsolvable word problem.

The below chain of inclusions  swiftly follows from our investigation 
\begin{equation}\label{inclusions}
  \mathcal C_1\subset \mathcal C_2\subset \mathcal C_3\subset \mathcal C_4\subset \mathcal C_5.
 \end{equation} Since we can always assume that a finitely generated
 group  $\Gamma$ has solvable WP on a finite set $S$,   the WP  is
 solvable on its conjugacy closure~ $\overline{S}$. This easily
 implies that all the conjugacy classes of a  group in $\mathcal C_1$ are finite. 
It was already observed in~\cite{Myas/11} that algorithmically  finite
groups  (our class~$\mathcal C_2$) must be periodic. This is also the
case for groups  with unsolvable UB-generic WP (our class~$\mathcal C_3$): in fact, an element of infinite order provides a sequence like in (4) of Theorem~\ref{principale}. 
The computably presented groups in $\mathcal C_5$ have infinitely many
conjugacy classes,  as stated in \cite[Corollary~1]{SM}. Indeed,
assuming that  $\ker \pi$ is computably enumerable and that there
exists a finite  set $S\subset \mathbb F_X$ containing an element of
each non-trivial conjugacy class, it is easy to see that  $\mathbb F_X \setminus\ker \pi=\overline S\cdot \ker \pi$ and then also the set of non-trivial words is computably enumerable.

To our best knowledge, the only strict inclusion in \eqref{inclusions} is that of $\mathcal C_4$ in $\mathcal C_5$. Moreover, the only examples of computably presented groups in $\mathcal C _4$ actually live in $\mathcal C_2$. This inspires the following questions.
\begin{question}\label{que:strict}
  \emph{Are any of the inclusions in \eqref{inclusions} strict and/or trivial?}
Or one could  ask the same question within various subclasses of computably presented groups, such as
\begin{itemize}
\item residually finite (see also \cite[Problem~3.3]{Myas/11} and \cite{khou/14,Kly/15});
\item amenable (see also \cite[Problem~3.4]{Myas/11} and  \cite[Theorem~2.3]{gilman/10});   with computable F\o lner sets \cite{Cavaleri17}; of intermediate growth;  
\item sofic; with subrecursive sofic dimension (see \cite{Cavaleritesi}).
  \end{itemize}
\end{question}
If $\Gamma$ is amenable there exist notions of Banach densities with respect to a F\o lner sequence (see, for instance, \cite{lupini}), which are generalizations of the classical Banach density for~$\mathbb Z$, cf.~\cite{sumset}. Once more, these densities are closely related to those considered in~\cite{soleckihaar} where, moreover, it is proved that they exhibit peculiar behavior on amenable groups; cf.\ also \cite{banakh}. These notions  allow to formulate  the  extension of Erd\H os Sumset conjecture to amenable groups: if $A\subset \Gamma$ has positive UB-density (with respect to a F\o lner sequence) then there exist two infinite subsets $B, C\subset \Gamma$ such that $BC\subset A$ (this conjecture was recently proved in~\cite{prooferdos}). 
\begin{question}
\emph{Are the Banach densities of   $A$ in $\Gamma$ and $\pi^{-1}(A)$ in $\mathbb F_X$ related?} \end{question}
Analogous investigations were carried out in \cite{visible} for free abelian groups. 
A positive  answer to this question could lead to intriguing implications and new questions in relation with Question~\ref{que:strict} and the aforementioned conjecture--now theorem.


\begin{bibdiv}
  \begin{biblist}

  \bib{AO}{article}{
      author={Arzhantseva, G.~N.},
      author={Olshanskii, A.~Yu.},
       title={Generality of the class of groups in which subgroups with a  lesser number of generators are free},
    language={Russian, with Russian summary},
        date={1996},
     journal={Mat. Zametki},
      volume={59},
      number={4},
       pages={489\ndash 496, 638},}

  \bib{banakh}{unpublished}{
 author={Banakh, T.},
       title={The Solecki submeasures and densities on groups},
        date={2012},
        note={Preprint:  arXiv:1211.0717},
  }
  
\bib{Boone}{article}{
      author={Boone, W.~W.},
       title={The word problem},
        date={1958},
     journal={Proc. Nat. Acad. Sci. U.S.A.},
      volume={44},
       pages={1061\ndash 1065},
}

\bib{BMR09}{article}{
AUTHOR = {Borovik, A.~V.},
author = {Miasnikov, A.},
author = {Remeslennikov, V.~N.},
     TITLE = {Generic complexity of the conjugacy problem in
              {HNN}-extensions and algorithmic stratification of {M}iller's
              groups},
   JOURNAL = {Internat. J. Algebra Comput.},
  FJOURNAL = {International Journal of Algebra and Computation},
    VOLUME = {17},
      YEAR = {2007},
    NUMBER = {5-6},
     PAGES = {963--997},
}

\bib{BMR07}{article}{
AUTHOR = {B{o}rovik, A.~V.},
author = {Miasnikov, A.},
author = {Remeslennikov, V.~N.},
     TITLE = {The conjugacy problem in amalgamated products. {I}. {R}egular
              elements and black holes},
   JOURNAL = {Internat. J. Algebra Comput.},
  FJOURNAL = {International Journal of Algebra and Computation},
    VOLUME = {17},
      YEAR = {2007},
    NUMBER = {7},
     PAGES = {1299--1333}}

\bib{Cavaleritesi}{unpublished}{
      author={{C}{}avaleri, M.},
       title={Algorithms and quantifications in amenable and sofic groups},
        date={2016},
        note={Thesis (Ph.D.)--Sapienza Universit\`a di Roma},
}

\bib{Cavaleri17}{article}{
      author={{C}ava{leri}, M.},
       title={Computability of {F}{\o}lner sets},
        date={2017},
     journal={{Internat. J. Algebra Comput.}},
      volume={27},
      number={7},
       pages={819\ndash 830},
}

\bib{Cavaleri/18}{article}{
      author={Cavaleri, M.},
       title={F\o lner functions and the generic word problem for finitely
  generated amenable groups},
        date={2018},
     journal={J. Algebra},
      volume={511},
       pages={388 \ndash  404},
}

\bib{Cohen/82}{article}{
      author={Cohen, J.~M.},
       title={Cogrowth and amenability of discrete groups},
        date={1982},
     journal={J. Funct. Anal.},
      volume={48},
      number={3},
       pages={301 \ndash  309},
}

\bib{Cooper}{book}{
      author={Cooper, S.~B.},
       title={Computability theory},
   publisher={Chapman and Hall/CRC},
        date={2004},
}

\bib{lupini}{article}{
      author={Di~Nasso, M.},
      author={Goldbring, I.},
      author={Jin, R.},
      author={Leth, S.},
      author={Lupini, M.},
      author={Mahlburg, K.},
       title={On a sumset conjecture of {E}rd\"{o}s},
        date={2015},
     journal={Canad. J. Math.},
      volume={67},
      number={4},
       pages={795\ndash 809},
}

\bib{gilman/10}{article}{
      author={Gilman, R.},
      author={Miasnikov, A.},
      author={Osin, D.},
       title={Exponentially generic subsets of groups},
        date={2010},
     journal={Illinois J. Math.},
      volume={54},
      number={1},
       pages={371\ndash 388},
}

\bib{Gri/77}{article}{
      author={Grigorchuk, R.~I.},
       title={Symmetric random walks on discrete groups},
        date={1977},
     journal={Uspekhi Mat. Nauk},
      volume={32},
       pages={217\ndash 218},
}

\bib{gromov}{book}{
      author={Gromov, M.},
       title={Hyperbolic groups},
      series={Essays in group theory},
   publisher={Springer, New York},
        date={1987},
}

\bib{average}{article}{
      author={Kapovich, I.},
      author={Miasnikov, A.},
      author={Schupp, P.},
      author={Shpilrain, V.},
     TITLE = {Average-case complexity and decision problems in group theory},
   JOURNAL = {Adv. Math.},
  FJOURNAL = {Advances in Mathematics},
    VOLUME = {190},
      YEAR = {2005},
    NUMBER = {2},
     PAGES = {343--359},
}

\bib{generic02}{article}{
      author={Kap{o}vich, I.},
      author={Miasnikov, A.},
      author={Schupp, P.},
      author={Shpilrain, V.},
       title={Generic-case complexity, decision problems in group theory, and
  random walks},
        date={2003},
     journal={J. Algebra},
      volume={264},
      number={2},
       pages={665 \ndash  694},
}

\bib{visible}{article}{
      author={Kapov{}ich, I.},
      author={Rivin, I.},
      author={Schupp, P.},
      author={Shpilrain, V.},
       title={Densities in free groups and {$\mathbb Z^k$}, visible points and
  test elements},
        date={2007},
     journal={Math. Research Letters},
      volume={14},
      number={2},
       pages={263\ndash 284},
}

\bib{KS05}{article}{
  AUTHOR = {Kapovich, I.},
  author = {Schupp, P.},
     TITLE = {Genericity, the {A}rzhantseva-{O}l\cprime shanskii method and the
              isomorphism problem for one-relator groups},
   JOURNAL = {Math. Ann.},
  FJOURNAL = {Mathematische Annalen},
    VOLUME = {331},
      YEAR = {2005},
    NUMBER = {1},
     PAGES = {1--19},}

\bib{KSS}{article}{
  AUTHOR = {Kapovich, I.},
  author = {Schupp, P.},
  author = {Shpilrain, V.},
     TITLE = {Generic properties of {W}hitehead's algorithm and isomorphism
              rigidity of random one-relator groups},
   JOURNAL = {Pacific J. Math.},
  FJOURNAL = {Pacific Journal of Mathematics},
    VOLUME = {223},
      YEAR = {2006},
    NUMBER = {1},
     PAGES = {113--140},}

\bib{khou/14}{article}{
      author={Khoussainov, B.},
      author={Miasnikov, A.},
       title={Finitely presented expansions of groups, semigroups, and
  algebras},
        date={2014},
     journal={Trans. Amer. Math. Soc.},
      volume={366},
       pages={1455\ndash 1474},
}

\bib{Kly/15}{article}{
      author={Klyachko, A.~A.},
      author={Mongush, A.~K.},
       title={Residually finite algorithmically finite groups, their subgroups
  and direct products},
        date={2015},
     journal={Math. Notes},
      volume={98},
      number={3},
       pages={414\ndash 418},
}

\bib{Myas/11}{article}{
      author={Miasnikov, A.},
      author={Osin, D.},
       title={Algorithmically finite groups},
        date={2011},
     journal={J. Pure Appl. Algebra},
      volume={215},
      number={11},
       pages={2789 \ndash  2796},
}
\bib{MR}{article}{
  AUTHOR = {Miasnikov, A.},
  AUTHOR = {Rybalov, A.~N.},
     TITLE = {Generic complexity of undecidable problems},
   JOURNAL = {J. Symbolic Logic},
  FJOURNAL = {The Journal of Symbolic Logic},
    VOLUME = {73},
      YEAR = {2008},
    NUMBER = {2},
     PAGES = {656--673},}

\bib{SM}{article}{
      author={Miasnikov, A.},
      author={Schupp, P.},
       title={Computational complexity and the conjugacy problem},
        date={2017},
     journal={Computability},
      volume={6},
       pages={307\ndash 318},
}

\bib{group-based}{book}{
      author={Miasnikov, A.},
      author={Shpilrain, V.},
      author={Ushakov, A.},
       title={Group-based {C}ryptography},
      series={Advanced Courses in Mathematics - CRM Barcelona},
   publisher={Birkh\"auser Basel},
        date={2008},
}

\bib{prooferdos}{article}{
      author={Moreira, J.},
      author={Richter, F.~K.},
      author={Robertson, D.},
      title= {A proof of a sumset conjecture of {E}rd\H{o}s},
      JOURNAL = {Ann. of Math. (2)},
  FJOURNAL = {Annals of Mathematics. Second Series},
    VOLUME = {189},
      YEAR = {2019},
    NUMBER = {2},
     PAGES = {605--652},
      }

\bib{sumset}{article}{
      author={Nathanson, M.B.},
       title={Sumsets contained in sets of {U}pper {B}anach density {$1$}},
        date={2016},
     journal={Combinatorial and Additive Number Theory II. Springer Proceedings
  in Mathematics and Statistics},
      volume={220},
       pages={239\ndash 248},
}

\bib{Novikov}{book}{
      author={Novikov, P.~S.},
       title={Ob algoritmi\v cesko\u\i\ nerazre\v simosti problemy to\v zdestva
  slov v teorii grupp},
    language={Russian},
      series={Trudy Mat. Inst. im. Steklov. no. 44},
   publisher={Izdat. Akad. Nauk SSSR, Moscow},
        date={1955},
}

\bib{soleckihaar}{article}{
  author = {Solecki, S.},
  title  = {Size of subsets of groups and Haar null sets},
 Journal = {GAFA, Geom. funct. anal.},
  FJOURNAL = {Geometric And Functional Analysis},
    VOLUME = {15},
      YEAR = {2005},
    pages= {246--273},
  
}

\bib{Woess/83}{article}{
      author={Woess, W.},
       title={Cogrowth of groups and simple random walks},
        date={1983},
     journal={Arch. Math. (Basel)},
      volume={41},
      number={4},
       pages={363\ndash 370},
}
\end{biblist}
\end{bibdiv}

\end{document}